\newcommand{\sspcoef}{{\cal{C}}}
\newcommand{\dx}{\Delta x}
\newcommand{\dt}{\Delta t}
\newcommand{\Dt}{\Delta t}
\newcommand{\m}[1]{\mathbf{#1}}
\newcommand{\mD}{\m{D}}
\newcommand{\DtFE}{\dt_{\textup{FE}}}
\newcommand\vy{{\bf y}}
\newcommand\ve{{\bf e}}
\newcommand{\aij}{\alpha_{i,j}}
\newcommand{\bij}{\beta_{i,j}}
\newtheorem{rmk}{Remark}%[section]
\newtheorem{thm}{Theorem}%[section]
\title{Explicit Strong Stability Preserving  Multistage Two-Derivative Time-Stepping Schemes}
\author{Andrew J. Christlieb$^{1}$, 
Sigal Gottlieb$^2$,
Zachary Grant$^2$\thanks{Corresponding author: zgrant@umassd.edu}, David C. Seal$^3$\\
{\small $^1$Department of Computational Mathematics Science and Engineering, 
Department of Electrical Engineering,} \\
{\small and Department of Mathematics,
Michigan State University} \\
{\small $^2$Department of Mathematics, University of Massachusetts, Dartmouth} \\
{\small $^3$Department of Mathematics, U.S. Naval Academy.}
}
\date{}
\begin{document}

\maketitle
\bibliographystyle{siam}

\vspace{-0.5in}
\abstract{High order strong stability preserving (SSP)  time discretizations are advantageous 
for use with  spatial discretizations with nonlinear stability properties for the
solution of hyperbolic PDEs. The search for high order strong stability time-stepping methods 
with large allowable strong stability time-step has been an active area of
research over the last two decades. Recently, multiderivative time-stepping 
methods have been implemented with hyperbolic PDEs. In this work we describe sufficient 
conditions for a two-derivative multistage method to be SSP, and find
some optimal SSP multistage two-derivative methods. While 
explicit SSP Runge--Kutta methods exist only up to fourth order, we show that this order barrier is broken
for explicit multi-stage two-derivative methods by designing a three stage fifth order SSP method. These methods
are tested on simple scalar PDEs to verify the order of convergence, and demonstrate the need for the SSP condition and the
sharpness of the SSP time-step in many cases.}

% https://github.com/SSPmethods/SSPMultiStageTwoDerivativeMethods

\vspace{-0.1in}
\section{Introduction}
\vspace{-0.05in}
\subsection{SSP methods}
When numerically approximating the solution to a hyperbolic conservation law
of the form 
\begin{eqnarray}
\label{pde}
	U_t + f(U)_x = 0,
\end{eqnarray}
%  (in one dimension), 
difficulties arise when 
the exact solution develops sharp gradients or discontinuities. 
Significant effort has been expended on developing spatial discretizations
that can handle discontinuities \cite{SSPbook2011}, especially for high-order methods. These discretizations
have special nonlinear non-inner-product stability properties, such as total variation stability or positivity, 
which ensure that when the semi-discretized equation
\begin{eqnarray}
\label{ode}
u_t = F(u),
\end{eqnarray}
(where $u$ is a vector of approximations to $U$) %:  $U^n_j \approx u(t^n, x_j) $)
is evolved using a forward Euler method, the numerical
solution satisfies  the desired strong stability property,
\begin{equation} \label{eqn:FEcond}
	\| u^n + \dt F(u^{n})  \| \leq \| u^n \|, \quad  0  \leq \dt \leq \Delta t_{FE},
\end{equation}
where $\| \cdot \|$ is any desired norm, semi-norm, or convex functional.

In place of the first order time discretization \eqref{eqn:FEcond}, we typically require a 
higher-order time integrator, % rather thanforward Euler, 
but we still wish to ensure that the  strong stability  property 
$	\| u^{n+1} \| \le \|u^n\| $
is satisfied, perhaps under a modified time-step restriction, where $u^n$ is a discrete approximation
to $U$ at time $t^n$. 
%\todo{Do we want to differentiate between $u^n$ and $U^n$? If so, we should try to be consistent
%throughout the remainder of the document.  If not, we should pick one, and edit the remainder of the document
%accordingly.  -DS}
In \cite{shu1988b}  it was  observed that some Runge--Kutta methods can be decomposed into convex combinations
of forward Euler steps, so that any  convex functional  property satisfied by \eqref{eqn:FEcond} will be {\em preserved}
by these higher-order time discretizations. %, generally under a different time-step restriction. 
For example, the $s$-stage explicit Runge--Kutta method   \cite{shu1988},
\begin{eqnarray}
\label{rkSO}
y^{(0)} & =  & u^n, \nonumber \\
y^{(i)} & = & \sum_{j=0}^{i-1} \left( \aij y^{(j)} +
\dt \bij F(y^{(j)}) \right), \; \; \; \; i=1, . . ., s\\% \quad \aik \geq 0,  \qquad i=1 ,..., m \\
 u^{n+1} & = & y^{(s)}  \nonumber
\end{eqnarray}
can be rewritten as  convex combination of forward Euler steps of the form \eqref{eqn:FEcond}.
If all the coefficients $\aij$ and $\bij$ are non-negative, and provided $\aij$ is zero only if 
its corresponding $\bij$ is zero, then each stage is bounded by
\begin{eqnarray*}
\| y^{(i)}\| & =  & 
\left\| \sum_{j=0}^{i-1} \left( \aij y^{(j)} + \dt \bij F(y^{(j)}) \right) \right\|   
 \leq   \sum_{j=0}^{i-1} \aij  \, \left\| y^{(j)} + \dt \frac{\bij}{\aij} F(y^{(j}) \right\|  .
\end{eqnarray*}
Noting that each $\| y^{(j)} + \dt \frac{\bij}{\aij} F(y^{(j)}) \| \leq \| y^{(j)} \|$ for
$ \frac{\bij}{\aij}  \dt \leq \DtFE$, and by consistency $\sum_{j=0}^{i-1} \aij =1$,
we have $ \| u^{n+1}\| \leq \| u^{n}\| $ as long as  
\begin{eqnarray}
	\label{rkSSP}
	\dt \leq \sspcoef  \DtFE \; \; \; \; \forall i, j,
\end{eqnarray}
where $\sspcoef = \min  \frac{\aij}{\bij}$.
(We employ the convention that if any of the $\beta$'s are equal to zero, the corresponding 
ratios are considered infinite.)  The resulting time-step restriction is a combination of two
distinct factors: (1) the term $\DtFE$ that depends on the spatial discretization, and 
(2) the SSP coefficient $\sspcoef$ that depends only on the time-discretization. Any method
that admits such a decomposition with $\sspcoef>0$ is called a {\em strong stability preserving (SSP)}
method.

This convex combination  decomposition was used in the development of 
 second and third order explicit Runge--Kutta methods \cite{shu1988} 
 and later of fourth order methods \cite{SpiteriRuuth2002, ketcheson2008}
that guarantee the  strong stability properties of any  spatial discretization, 
provided only that these properties are satisfied when using the forward Euler (first derivative) condition
in \eqref{eqn:FEcond}.
Additionally, the convex combination approach  also guarantees that 
the intermediate stages in a Runge--Kutta method  satisfy the strong stability property as well.

The convex combination approach clearly provides a sufficient condition for preservation of strong stability.
Moreover, it has also be shown that this condition is necessary
 \cite{ferracina2004, ferracina2005,higueras2004a, higueras2005a}.
Much research on SSP methods focuses on finding high-order time discretizations
with the largest allowable time-step  $\Dt \le \sspcoef \DtFE$ by maximizing
the   {\em SSP coefficient} $\sspcoef$ of the method. 
%In fact, a more relevant measure is the
% {\em effective SSP coefficient} $\ceff = \frac{\sspcoef}{s}$ where the cost is 
% relative to the number of function evaluations
%at each time-step -- typically the number of stages  $s$ of a method.
It has been shown that explicit  Runge--Kutta methods with positive SSP coefficient cannot be more than
fourth-order accurate \cite{kraaijevanger1991,ruuth2001}; this led to the 
study of other classes of explicit SSP methods, such as methods with multiple steps.
Explicit multistep SSP methods of order  $p>4$ do exist, but have 
severely restricted time-step requirements \cite{SSPbook2011}. Explicit multistep
multistage methods that are SSP and have order $p>4$ have been developed as well \cite{tsrk,msrk}.

Recently, multi-stage multiderivative methods have been proposed for use with hyperbolic PDEs
\cite{sealMSMD2014, tsai2014}.
The question then arises as to whether these methods can be strong stability preserving as well.
 Nguyen-Ba and colleagues studied the SSP properties of the Hermite-Birkoff-Taylor methods 
with a set of simplified base conditions in  \cite{Nguyen-Ba2010}.
In this work we consider multistage two-derivative methods and develop sufficient conditions for strong stability preservation 
for these methods, and we show that explicit SSP methods within this class can break this well-known order barrier for 
explicit Runge--Kutta methods. Numerical results demonstrate that the SSP condition is useful in preserving the 
nonlinear stability properties of the underlying spatial discretization and that the allowable time-step predicted by the SSP
theory we developed is sharp in  many cases.
%Add Vaillencourt?

\vspace{-.125in}
\subsection{Multistage multiderivative methods} \label{MSMDoc}
To increase the possible order of any method, we can use more steps (e.g. linear multistep methods), 
more stages (e.g. Runge--Kutta methods), or more derivatives (Taylor series methods).
It is also possible to combine these approaches to obtain methods with multiple steps, 
stages, and derivatives. Multistage multiderivative integration methods were first considered in 
\cite{obreschkoff1940,Tu50,StSt63}, and multiderivative time integrators for ordinary differential 
equations have been developed  in \cite{shintani1971,shintani1972,KaWa72,KaWa72-RK,
mitsui1982,ono2004, tsai2010},  but only recently have these methods been 
explored for use with  partial differential equations (PDEs) \cite{sealMSMD2014,tsai2014}.
 In this work, we consider 
explicit multistage two-derivative time integrators as applied to the numerical solution of 
hyperbolic conservation laws.

We consider the system of ODEs \eqref{ode} resulting from the spatial discretization of a 
hyperbolic PDE of the form \eqref{pde}. 
We define the one-stage, two-derivative building block method
$ u^{n+1} = u^n + \alpha \Delta t F(u^n) + \beta \Delta t^2 \dot{F}(u^n) $
where $\alpha \geq 0$ and $\beta \geq 0$ are coefficients chosen to ensure the desired order. This method
can be at most second order, with coefficients $\alpha=1$ and $\beta = \frac{1}{2}$. This is 
the second-order Taylor series method. 
To obtain higher order explicit methods, we can add more stages:
{\small
\begin{eqnarray}
\label{MSMD}
%y^{(1)} & = & u^n  \nonumber \\
y^{(i)} & = &  u^n +  \dt \sum_{j=1}^{i-1} \left( a_{ij} F(y^{(j)}) +
\dt \hat{a}_{ij} \dot{F}(y^{(j)}) \right), \; \; \; \; i=1, . . ., s \\
 u^{n+1} & = &  u^n +  \dt \sum_{j=1}^{s} \left( b_{j} F(y^{(j)}) +
\dt \hat{b}_{j} \dot{F}(y^{(j)}) \right) . \nonumber
\end{eqnarray}
We can write the coefficients in matrix vector form, where
\[ A = \left( \begin{array}{llll}
 0 & 0 &  \vdots & 0 \\
 a_{21} & 0 & \vdots & 0  \\
 \vdots & \vdots & \vdots & \vdots \\
 a_{s1} & a_{s2} & \vdots & 0\\
 \end{array} \right) , 
 \; \; 
 \hat{A} = \left( \begin{array}{llll}
 0 & 0 &  \vdots & 0 \\
 \hat{a}_{21} & 0 & \vdots & 0  \\
 \vdots & \vdots & \vdots & \vdots \\
 \hat{a}_{s1} & \hat{a}_{s2} & \vdots & 0 \\
 \end{array} \right) , \; \; 
b = \left(  \begin{array}{llll}  b_1 \vspace{0.1in} \\ b_2  \vspace{0.1in}
 \\ \vdots  \\ b_s \vspace{0.1in} \\ \end{array} \right) , \; \;
\hat{b} =\left(\begin{array}{llll}  \hat{b}_1 \vspace{0.1in} \\  \hat{b}_2  \vspace{0.1in}\\ \vdots \\  \hat{b}_s \vspace{0.1in} \\ \end{array} \right) .
  \] 
  We let $c = A \ve$ and $\hat{c} = \hat{A} \ve$, where $\ve$ is a vector of ones.
These coefficients are then selected to attain the desired order, based on the order conditions 
written in Table \ref{MSMD_OC} as described in \cite{tsai2010,GeWi86}.
% \todo{there are some other papers to cite here, including Gekeler and Widman, and others  that we can cite ... -DS}:
\begin{table}[!b] 	\hspace{0.7in} \def\arraystretch{1.4} {\small
\noindent \begin{tabular}{|l|l|} \hline
	 $p = 1$  & $b^T e =1$  \\ \hline
	 $p = 2$  &  $b^T c+\hat{b}^Te =\frac{1}{2}$ \\ \hline
          $p=  3$ & $b^T c^2 + 2\hat{b}^T c=\frac{1}{3}$ \\ 
	    & $b^TAc+b^T\hat{c}+\hat{b}^Tc=\frac{1}{6}$ \\ \hline
	  $p=4$   & $b^Tc^3+3\hat{b}^Tc^2=\frac{1}{4}$ \\
	 & $b^TcAc+b^Tc\hat{c}+\hat{b}^Tc^2+\hat{b}^TAc+\hat{b}^T\hat{c}=\frac{1}{8}$ \\
	 & $b^TAc^2+2b^T\hat{A}c+\hat{b}^Tc^2=\frac{1}{12}$ \\
	 &  $b^TA^2c+ b^TA\hat{c}+ b^T\hat{A}c+\hat{b}^TAc+\hat{b}^T\hat{c}=\frac{1}{24}$ \\ \hline
	$p = 5$   &      $ b^Tc^4 + 4\hat{b}^Tc^3 =\frac{1}{5}$ \\
	           &  $b^Tc^2Ac + b^Tc^2\hat{c}+\hat{b}^Tc^3+2\hat{b}^TcAc+2\hat{b}^Tc\hat{c}=\frac{1}{10}$ \\
	           &  $b^TcAc^2+2b^Tc\hat{A}c+\hat{b}^Tc^3+\hat{b}^T Ac^2+2\hat{b}^T\hat{A}c=\frac{1}{15}$ \\
	           &  $b^TcA^2c+b^TcA\hat{c}+b^Tc\hat{A}c+\hat{b}^TcAc+\hat{b}^Tc\hat{c} 
	         + \hat{b}^TA^2c+\hat{b}^TA\hat{c}+\hat{b}^T\hat{A}c=\frac{1}{30}$ \\
	           &  $b^T(Ac)(Ac)+2b^T\hat{c}Ac+b^T\hat{c}^2+ 2\hat{b}^TcAc+2\hat{b}^Tc\hat{c}=\frac{1}{20}$ \\
	          & $ b^TAc^3+3b^T\hat{A}c^2+\hat{b}^Tc^3=\frac{1}{20}$\\
	           &  $b^TA(cAc)+b^TA(c\hat{c})+b^T\hat{A}c^2+b^T\hat{A}Ac+b^T\hat{A}\hat{c}  
	     + \hat{b}^TcAc+\hat{b}^Tc\hat{c}=\frac{1}{40}$ \\
	 &  $b^TA^2c^2+2b^TA\hat{A}c+b^T\hat{A}c^2+\hat{b}^T Ac^2+2\hat{b}^T\hat{A}c=\frac{1}{60}$ \\
	 &  $b^TA^3 c+b^TA^2 \hat{c}+b^T A \hat{A}c+b^T\hat{A}Ac+b^T\hat{A}\hat{c}
	+\hat{b}^TA^2c +\hat{b}^TA\hat{c}+\hat{b}^T\hat{A}c=\frac{1}{120} $ \\ \hline
	\end{tabular}}

	\caption{Order conditions for multi-stage multiderivative methods of the form \eqref{MSMD} as in \cite{tsai2010}.}
	\label{MSMD_OC} 
\end{table}

\begin{rmk}
In this work, we focus on multistage two-derivative methods as time integrators for use with hyperbolic PDEs.
In this setting, the operator  $F$  is obtained by a spatial discretization of the term $U_t= -f(U)_x$ to obtain the 
system $u_t = F(u)$. This is the typical method-of-lines approach, and SSP methods were introduced in the context of this
approach. The computation of the second derivative term $\dot{F}$ should follow directly from the definition of $F$,
where we compute $\dot{F} = F(u)_t = F_u u_t = F_u F$. In practice, the calculation of $F_u$ may be computationally
prohibitive, as for example in the popular WENO method where $F$ has a highly nonlinear dependence on $u$.

Instead, we adopt a Lax-Wendroff type approach, where we use the fact that the system of ODEs arises from the PDE
\eqref{pde} to replace the time derivatives by the spatial derivatives, and discretize these in space. This approach 
begins with the observation that $F(u) = u_t = U_t + O(\Delta x^m)$ (for some integer $m$). The term $F(u)$ is 
typically computed  using a conservative spatial 
discretization $D_x$ applied to the flux: 
\[F(u) = D_x(-f(u)).\]
Next we approximate 
\[F(u)_t = u_{tt} \approx U_{tt} = -f(U)_{xt} = \left(-f(U)_t \right)_x =   
\left(-f'(U) U_t \right)_x \approx 
\tilde{D}_x \left( -f'(u) u_t \right),
\]
where a (potentially different) spatial approximation $\tilde{D}_x$ is used.
This means that
\[F(u)_t = u_{tt} =  U_{tt} + O(\Delta x^n) = \dot{F} + O(\Delta x^r) \] (for some integers $n$ and $r$). 

Since $F_t  = \dot{F} + O(\Delta x^r)$, we will not necessarily obtain the time order $\Delta t^p$ when we satisfy the 
order conditions in Table \ref{MSMD_OC}, as our temporal order is polluted by spatial errors as well. 
%It is interesting to note that calculating $\dot{F}$ directly as  $\dot{F} = F_u F$ may lead to an unstable scheme.
%A famous example of this can be found by looking at the original Lax-Wendroff scheme \cite{LW1960}. 
%In that work, the authors use the second order Taylor series method with
%\[u_t  \approx  F(u) :=   \frac{u_{i+1} - u_{i-1}}{ 2 \Delta x}, \hspace{.5in} \mbox{and} \hspace{.5in} %\label{eqn:lxw-2}
%u_{tt} \approx \dot{F}(u) := \frac{u_{i+1} - 2 u_i + u_{i-1}}{ \Delta x^2}\]
%to obtain a stable method. We observe that this formulation of $ \dot{F}$ is not consistent with the definition of 
%$\dot{F} = F_u F$. A consistent definition  would be 
%\[u_{tt} \approx \dot{F}(u) := \frac{u_{i+2} - 2 u_i + u_{i-2}}{ 2 \Delta x^2},\] however, this  leads to an unstable scheme.
The concern about the Lax-Wendroff approach is that the order conditions in Table  \ref{MSMD_OC} are based on the assumption that
 $\dot{F} = F_t $, which is not exactly correct in our case, thus introducing additional errors. However, these errors 
are of  order $r$ in space, so that in practice, as long as the 
 spatial errors are smaller than the temporal errors, we expect to see the correct order of accuracy in time. 

To verify this, in Section \ref{convergence} 
we perform numerical convergence studies of these temporal methods where $\dot{F}$ is approximated by high order
spatial schemes and compare the errors from these methods to those from  
well-known SSP Runge--Kutta methods. We observe that if the spatial and temporal grids are refined together, the expected order of 
accuracy is demonstrated. Furthermore, if the spatial grid is held fixed but the spatial discretization is highly accurate, the correct
time-order is observed until the time error falls below the spatial error.
We conclude that in practice, it is not necessary to compute $F_t$ exactly, and that the use of a Lax-Wendroff type procedure
that replaces the temporal derivatives by spatial derivatives and discretizes each of these independently, does not destroy the 
temporal accuracy.

This observation is not new: many other methods have adopted this type of approach and obtained genuine high-order accuracy.
Such methods include the original Lax-Wendroff method \cite{LW1960},  ENO methods  \cite{harten1987b},
finite volume ADER methods \cite{ADER}, the finite difference WENO Schemes in \cite{ShuQiu}, and the Lax-Wendroff discontinuous 
Galerkin schemes \cite{QiuDumbserShu2005}.
 %and $\dot{F}$  are 
%and the a $\dot{F}$ based on this discretization. 
%From the point of view of the order conditions, it is sufficient that $\dot{F}$ approximate $u_{tt}$ to a 
%given order of accuracy that matches the desired order of the  time-stepping method.
%To make these methods suitable for use with hyperbolic PDEs, we need to compute $\dot{F}$ in a way that ensures some strong 
%stability properties.
\end{rmk}
In the next section we will discuss how to ensure that a multistage two-derivative method will preserve these strong stability
properties. 
	
\section{The SSP condition for multiderivative methods}
\subsection{Motivating Examples}
To understand the strong stability condition for multiderivative methods, we consider the strong stability properties of a 
multiderivative building block of the form
\[ u^{n+1} = u^n + \alpha \Delta t F(u^n) + \beta \Delta t^2 \dot{F}(u^n) ,\]
and begin with the simple linear one-way wave equation $U_t = U_x$.
This equation has the property that its second derivative in time
is, with the assumption of sufficient smoothness, also the second derivative in space:
\[
	U_{tt} = (U_x)_t = ( U_{t} )_x = U_{xx}.
\]
We will use this convenient fact in a Lax-Wendroff type approach to define $\dot{F}(u^n)$ by a spatial discretization of $U_{xx}$.

For this problem, we define $F$ by the original first-order upwind method 
\begin{subequations}
\label{eqn:low-order}
\begin{equation}
\label{eqn:low-order-a}
	F(u^n)_j := \frac{1}{\dx} \left(u^n_{j+1} - u^n_j \right) \approx U_x( x_j ),
\end{equation}
and $\dot{F}$ by the second order centered discretization to $U_{xx}$:
\begin{equation}
\label{eqn:low-order-b}
	\dot{F}(u^n)_j := \frac{1}{\dx^2} \left( u^n_{j+1}- 2 u^n_j + u^n_{j-1} \right) \approx  U_{xx}( x_j ).
\end{equation}
\end{subequations}
These spatial discretizations are total variation diminishing (TVD) in the following sense:
\begin{subequations}
\label{eqn:tvd-properties}
\begin{eqnarray}
\label{FEex}
%\mbox{Forward Euler condition} \; \; \;  \; 
u^{n+1} &=& u^n + \Delta t F(u^n)   \; \;  \; \;  \mbox{is TVD for} \; \; \; \; \Delta t \leq \Delta x,  \\
\label{VVex}
%\mbox{Second order condition} \; \; \; \; \; \; \; \;  
u^{n+1} &=& u^n + \Delta t^2 \dot{F}(u^n)  \; \; \; \;  \mbox{is TVD for} \; \; \; \;   \Delta t \leq  \frac{\sqrt{2}}{2} \Delta x.
 \end{eqnarray}
\end{subequations}

\begin{rmk} Note that we chose a second derivative $\dot{F}$ in space that is not an \emph{exact} derivative $F_t(u^n)$ of 
$F(u^n)$ in the method-of-lines formulation. However, as noted in Remark 1 and will be shown in the convergence studies, 
if the spatial and temporal grids are co-refined, 
this provides a sufficiently accurate approximation to $\dot{F}(u^n)$.
The exact derivative can be obtained by applying the upwind differentiation operator to the
solution twice, which produces 
\[ F_t  := \frac{1}{\dx^2} \left( u^n_{j+2}- 2 u^n_{j+1} + u^n_{j} \right) .\] However, computing $\dot{F}=F_t$ using this
formulation  does not satisfy the condition
\eqref{VVex} for any value of $\Delta t$.
\end{rmk}

To establish the TVD properties of the multiderivative building block we decompose it:
\begin{eqnarray*}
 u^{n+1} &=& u^n + \alpha \Delta t F(u^n) + \beta \Delta t^2 \dot{F}(u^n) 
= a u^n + \alpha \Delta t F(u^n) + (1-a) u^n + \beta \Delta t^2 \dot{F}(u^n) \\
 & = & a \left( u^n + \frac{\alpha}{a} \Delta t F(u^n) \right) +
 (1-a) \left( u^n + \frac{\beta}{1-a} \Delta t^2 \dot{F}(u^n) \right).
\end{eqnarray*}
%We then calculate the total variation (TV) of the solution through
%\begin{eqnarray*}
%\| u_j^{n+1} \|_{TV} &=& \sum_j \left| a \left( u_j^n + \frac{\alpha}{a} \Delta t F(u^n)_j \right) +
% (1-a) \left( u_j^n + \frac{\beta}{1-a} \Delta t^2 \dot{F}(u^n)_j \right)\right|\\
% & \leq &  \sum_j \left|| a \left( u_j^n + \frac{\alpha}{a} \Delta t F(u^n)_j \right) \right| +
%\sum_j \left| (1-a) \left( u_j^n + \frac{\beta}{1-a} \Delta t^2 \dot{F}(u^n)_j \right) \right| .
% \end{eqnarray*}
It follows that for any $0 \leq a \leq 1$ this is a convex combination of terms of the form \eqref{FEex} and \eqref{VVex}, and so
\begin{eqnarray*}
 \|u^{n+1} \|_{TV} & \leq &  a \left\| \left( u^n + \frac{\alpha}{a} \Delta t F(u^n) \right) \right\|_{TV} +
(1-a)  \left\|  \left( u^n + \frac{\beta}{1-a} \Delta t^2 \dot{F}(u^n) \right) \right\|_{TV} \\
& \leq & a \left\| u^n \right\|_{TV} + (1-a) \left\| u^n \right\|_{TV}
 \leq  \|u^{n} \|_{TV}
 \end{eqnarray*}
 for time-steps satisfying 
$\Delta t \leq \frac{a}{\alpha} \Delta x $ and
$\Delta t^2 \leq \frac{1-a}{2 \beta}  \Delta x^2. $ 
The first restriction relaxes as  $a$ increases while the second 
becomes tighter as $a$ increases, so that the value of $a$ that maximizes these conditions 
occurs when these are equal.  This is given by
\[
	a^2 + \frac{\alpha^2}{2 \beta} a - \frac{\alpha^2}{2 \beta} =0 ,\; \; \;  \implies
	\; \; \; a =  \frac{ \alpha \sqrt{\alpha^2 + 8 \beta} - \alpha^2}{4 \beta}.\]
Using this SSP analysis, we conclude that 
\begin{eqnarray} \label{BBex}
\left\| u^n + \alpha \Delta t F(u^n)+ \beta  \Delta t^2 \dot{F}(u^n) \right\|_{TV} \leq \left\| u^n \right\|_{TV} 
\; \; \;  \mbox{for} \; \; \; 
 \Delta t \leq \frac{ \sqrt{\alpha^2 + 8 \beta} - \alpha}{4 \beta} \Delta x.
 \end{eqnarray}

Of course, for this simple example we can directly compute the value of $\Delta t$ for which the
multiderivative building block is TVD.  That is, with $\lambda := \frac{\dt}{\Delta x} \geq 0$, we observe that
\begin{equation*}
\|u^{n+1} \|_{TV} = \left\|   \left(  (1 - \alpha \lambda - 2 \beta \lambda^2) u^n_j + (\alpha \lambda +
    \beta \lambda^2 ) u^n_{j+1} + \beta \lambda^2 u^n_{j-1}  \right)    \right\|_{TV}
    \leq \|u^{n} \|_{TV},
\end{equation*}
provided that
\[ 
	1 - \alpha \lambda - 2 \beta \lambda^2  \geq 0  
	\iff \lambda \leq \frac{ \sqrt{\alpha^2 + 8 \beta} - \alpha}{4 \beta}.
\] 
We see that for this case, the SSP bound is sharp:  the convex combination approach 
provides us exactly the same bound as directly computing the requirements for total variation.  

We wish to generalize this for cases in which the second derivative condition \eqref{VVex} holds for 
$ \Delta t \leq  K \DtFE$ where $K$ can take on any positive value, not just $\frac{\sqrt{2}}{2} $.
For the two-derivative  building block method this can be done quite easily:
\begin{thm}
Given $F$ and $\dot{F}$ such that
\[ \| u^n + \Delta t F(u^n) \| \leq \|u^n\|  \; \; \; \;  \mbox{for} \; \; \; \; \Delta t \leq \DtFE,  \]
and
\[ \| u^n + \Delta t^2 \dot{F}(u^n)  \| \leq \|u^n\|
  \; \; \; \;  \mbox{for} \; \; \; \;   \Delta t \leq  K \DtFE ,\]
 the two-derivative building block 
\[u^{n+1} = u^n + \alpha \Delta t F(u^n) + \beta \Delta t^2 \dot{F}(u^n) \] satisfies the monotonicity condition
 $ \|u^{n+1} \| \leq \|u^n \| $ under the time-step restriction
 \[\Delta t \leq  \frac{K}{2 \beta} \left( 
 \sqrt{ \alpha^2 K^2 + 4 \beta} - \alpha K 
 \right) \DtFE. \]
\end{thm}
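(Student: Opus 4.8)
The plan is to reproduce the convex-combination argument from the worked example preceding the theorem, but with the general second-derivative constant $K$ in place of the specific value $\tfrac{\sqrt{2}}{2}$. First I would introduce a splitting parameter $a \in [0,1]$ and write the building block as the identity
\[
u^{n+1} = a\left(u^n + \frac{\alpha}{a}\Delta t\, F(u^n)\right) + (1-a)\left(u^n + \frac{\beta}{1-a}\Delta t^2\, \dot{F}(u^n)\right),
\]
which holds for any $a$ because the $u^n$, $F$, and $\dot{F}$ terms recombine to the original expression. Since $a + (1-a) = 1$ and $\alpha,\beta \geq 0$, this is a genuine convex combination, so the triangle inequality bounds $\|u^{n+1}\|$ by $a\|u^n + \tfrac{\alpha}{a}\Delta t F(u^n)\| + (1-a)\|u^n + \tfrac{\beta}{1-a}\Delta t^2 \dot{F}(u^n)\|$.

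Next I would invoke the two hypotheses termwise. The first term is a forward-Euler step of effective size $\tfrac{\alpha}{a}\Delta t$, so it is bounded by $\|u^n\|$ provided $\tfrac{\alpha}{a}\Delta t \le \DtFE$, i.e.\ $\Delta t \le \tfrac{a}{\alpha}\DtFE$. The second term is a pure second-derivative step whose effective increment $\Delta s$ satisfies $\Delta s^2 = \tfrac{\beta}{1-a}\Delta t^2$; the second hypothesis bounds it by $\|u^n\|$ when $\Delta s \le K\DtFE$, that is, $\Delta t^2 \le \tfrac{(1-a)K^2}{\beta}\DtFE^2$. Combining, the monotonicity $\|u^{n+1}\| \le \|u^n\|$ holds whenever both $\Delta t \le \tfrac{a}{\alpha}\DtFE$ and $\Delta t \le K\sqrt{\tfrac{1-a}{\beta}}\,\DtFE$ are satisfied.

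The optimization step is to choose $a$ so that the allowable $\Delta t$ is as large as possible. The first bound increases with $a$ while the second decreases, so the optimum occurs where the two are equal. Setting $\tfrac{a}{\alpha} = K\sqrt{\tfrac{1-a}{\beta}}$ and squaring produces the quadratic $\beta a^2 + \alpha^2 K^2 a - \alpha^2 K^2 = 0$, whose positive root is $a = \tfrac{\alpha K}{2\beta}\left(\sqrt{\alpha^2 K^2 + 4\beta} - \alpha K\right)$. Substituting this value into $\Delta t \le \tfrac{a}{\alpha}\DtFE$ cancels the $\alpha$ and yields exactly the claimed restriction $\Delta t \le \tfrac{K}{2\beta}\left(\sqrt{\alpha^2 K^2 + 4\beta} - \alpha K\right)\DtFE$.

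The only bookkeeping I expect is verifying that this root genuinely lies in $[0,1]$ so that the convex combination is legitimate: positivity is immediate since $\sqrt{\alpha^2 K^2 + 4\beta} > \alpha K$ when $\beta > 0$, and the upper bound $a \le 1$ reduces, after squaring, to $0 \le 4\beta^2$. I would also dispatch the degenerate cases separately, as these make the splitting trivial rather than obstructive: if $\beta = 0$ the method is a single forward-Euler step and the bound degenerates (in the limit) to $\Delta t \le \tfrac{1}{\alpha}\DtFE$, while if $\alpha = 0$ one simply takes $a = 0$ and recovers $\Delta t \le \tfrac{K}{\sqrt{\beta}}\DtFE$. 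I do not anticipate a real obstacle here; the entire argument is the preceding worked example with the constant $\tfrac{\sqrt{2}}{2}$ promoted to an arbitrary $K$, and one can check that substituting $K = \tfrac{\sqrt{2}}{2}$ and $\DtFE = \Delta x$ recovers the earlier sharp bound.
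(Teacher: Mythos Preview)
Your argument is correct and follows essentially the same convex-combination approach as the paper: the same splitting with parameter $a$, the same two time-step constraints, and the same optimization by equating them to obtain the quadratic in $a$. You go slightly further than the paper by explicitly checking $a\in[0,1]$ and handling the degenerate cases $\alpha=0$ and $\beta=0$, which is fine added rigor but not a different method.
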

\begin{proof}
As above, we rewrite 
\begin{eqnarray*}
u^{n+1} &= & a \left( u^n + \frac{\alpha}{a} \Delta t F(u^n) \right) +
 (1-a) \left( u^n + \frac{\beta}{1-a} \Delta t^2 \dot{F}(u^n) \right),
\end{eqnarray*}	
which is a convex combination provided that $0 \leq a \leq 1$. The time-step restriction that follows from this 
convex combination must satisfy
 $ \frac{\alpha}{a} \Delta t  \leq \DtFE$ and $\frac{\beta}{1-a} \Delta t^2 \leq K^2 \DtFE^2$.
The first condition becomes $ \Delta t  \leq \frac{a}{\alpha} \DtFE$ while the second is 
$ \Delta t  \leq  \sqrt{\frac{1-a}{\beta} } K \DtFE$. 
We observe that on $0 \leq a \leq 1$
the first term encourages a larger $a$ while the second term 
is less restrictive with a smaller $a$. 
The two conditions are balanced, and thus the allowable time-step is maximized, when we equate the right hand sides:
\[\frac{a}{\alpha} =  \sqrt{\frac{1-a}{\beta} } K \; \; \; \; \rightarrow  \; \; \; \; 
 a = \frac{\alpha K}{2 \beta} \left( 
 \sqrt{ \alpha^2 K^2 + 4 \beta} - \alpha K 
 \right). \]
 Now using the first condition, $ \Delta t  \leq \frac{a}{\alpha} \DtFE$ we obtain our result.
\end{proof}

A more realistic motivating example is the unique two-stage fourth order  method
\begin{eqnarray}\label{2s4p}
    u^*     &=& u^n + \frac{\Delta t}{2} F(u^n) + \frac{\Delta t^2}{8} \dot{F}(u^n),  \nonumber \\
    u^{n+1} &=& u^n + \Delta t           F(u^n) + \frac{\Delta t^2}{6}( \dot{F}(u^n)+2\dot{F}(u^*)).
\end{eqnarray}
The first stage of the method is a Taylor series method with $\frac{\Delta t}{2}$, while the second stage can be written
\begin{eqnarray*}
u^{n+1} &= & u^n + \Delta t           F(u^n) + \frac{\Delta t^2}{6}( \dot{F}(u^n)+2\dot{F}(u^*))\\
& = & a \left( u^* - \frac{\Delta t}{2} F(u^n) -  \frac{\Delta t^2}{8} \dot{F}(u^n) \right) + (1-a) u^n 
+ \Delta t           F(u^n) + \frac{\Delta t^2}{6}( \dot{F}(u^n)+2\dot{F}(u^*))\\
& = &  (1-a) \left( u^n  
	+ \frac{1 - \frac{a}{2}}{1-a}  \Delta t   F(u^n) 
	+   \frac{\frac{1}{6} -  \frac{a}{8}}{1-a}  \Delta t^2 \dot{F}(u^n) \right)
	+ a \left( u^* + \frac{\Delta t^2}{3a} \dot{F}(u^*)  \right) .
\end{eqnarray*}
 For $ 0 \leq a \leq 1$ this is a convex combination of two terms.
The first term  is of the form \eqref{BBex}, which gives the time-step restriction
\[ \dt \leq \frac{6}{4-3a} \left( \sqrt{ \left( \frac{5}{4} a^2 - \frac{10}{3} a +
 \frac{7}{3} \right) } - 1 + \frac{a}{2}  \right) \DtFE. \]
The second is of the form \eqref{VVex}, so we have
$ \dt \leq  \sqrt{\frac{3 a}{2} } \DtFE.$
We plot these two in Figure \ref{Fig2s4pSSP}, and we observe
that the first term is decreasing in $a$ (blue line) while the second term is increasing in $a$ (red line).
As a result, we obtain the optimal allowable time-step by setting these two equal, which yields
$a \approx 0.3072182638002141$ and the corresponding SSP coefficient,
$\sspcoef \approx 0.6788426884782078$. A direct computation of the TVD time-step for this case,
which takes advantage of the linearity of the problem and the spatial discretization, gives the bound
$\dt \leq (\sqrt{3}-1) >  0.6788.$ This shows that, as we expect, the SSP condition is not always sharp.

The convex combination approach becomes more complicated when dealing with multi-stage methods. It is the 
most appropriate approach  for developing an understanding of the strong stability property
of a given method. However, it is not computationally efficient for finding optimal SSP methods.
In the following section, we show how to generalize the convex combination decomposition approach 
and we use this generalization to formulate an SSP optimization problem along the lines of \cite{ketcheson2008,ketchcodes}.

\subsection{Formulating the SSP optimization problem}

As above, we begin with the hyperbolic conservation law \eqref{pde}
and adopt a spatial discretization so that we have the system of ODEs \eqref{ode}. 
The spatial discretization $F$ is specially designed so that it satisfies 
the {\em forward Euler} (first derivative) condition
\begin{eqnarray} \label{FE}
	\mbox{\bf Forward Euler condition} \; \; \; \; \; \; \; \;  \|u^n +\Delta t F(u^n) \| \leq \| u^n  \| 
	\; \; \; \; \; \mbox{for}  \; \; \; \; \;  \Delta t \leq \Delta t_{FE},
\end{eqnarray}
for the desired stability property indicated by the convex functional $\| \cdot \|$.  
For multiderivative methods, in addition to the first derivative, we need to appropriately 
approximate the  second derivative in time $u_{tt}$, to which we represent the discretization
 as $\dot{F}$. 
% We note that $\dot{F}$ can be obtained by 
%making some approximation to the time derivative of the spatial discretization $F$, or it can be 
%obtained as a direct approximation to $f(U)_{xt}$. Regardless of how $\dot{F}$ is computed, 
It is not immediately obvious what should be the form of a  condition
that would account for the effect of the $\Delta t^2  \dot{F}$ term. 
Motivated by the examples in the previous sections, we choose the % second derivative condition
\begin{eqnarray} \label{vanishing}
\mbox{\bf Second derivative condition} \; \; \; \; \; \; \; \;  
\|u^n +\Delta t^2 \dot{F}(u^n) \| \leq \| u^n  \| \; \; \; \; \; \mbox{for}  \; \; \; \; \;  \Delta t \leq K \Delta t_{FE},
\end{eqnarray}
where $K$ is a scaling factor that compares the stability condition of the second derivative term
to that of the forward Euler term.
Given conditions  \eqref{FE} and \eqref{vanishing}, we wish to formulate sufficient 
conditions so that the multiderivative  method \eqref{MSMD} satisfies the desired monotonicity condition
under a given time-step. First, we write the method 
\eqref{MSMD} in an equivalent matrix-vector form 
\begin{eqnarray} \label{MSMDvector}
\vy = \ve u^n + \Delta t S F(\vy) +  \Delta t^2 \hat{S} \dot{F}(\vy),
\end{eqnarray}
where 
\[ 
	S= \left[ \begin{array}{ll} A & \textbf{0} \\ b^T & 0 \end{array} \right] \; \; \; \; \;
	\mbox{and} \; \; \; \; \; \hat{S}= \left[ \begin{array}{ll} \hat{A} & \textbf{0} \\ \hat{b}^T & 0  \end{array}  \right]
\] and $\ve$ is a vector of ones.
We are now ready to state our result:

\begin{thm} Given spatial discretizations $F$ and $\dot{F}$ that satisfy \eqref{FE} and \eqref{vanishing},
a two-derivative multistage method  of the form \eqref{MSMDvector} preserves  the strong stability property 
$ \| u^{n+1} \| \leq \|u^n \|$  under the time-step restriction $\Delta t \leq r \DtFE$ if  satisfies the conditions
\begin{subequations} 
\begin{align} 
\left( I +r S  + \frac{r^2}{K^2} \hat{S} \right)^{-1}  \ve  \geq  0   \label{SSPcondition1} \\
r  \left( I +r S  + \frac{r^2}{K^2} \hat{S} \right)^{-1} S \geq  0   \label{SSPcondition2} \\
 \frac{r^2}{K^2}   \left( I +r S  + \frac{r^2}{K^2}  \hat{S} \right)^{-1} \hat{S} \geq 0   \label{SSPcondition3}
 \end{align} 
\end{subequations}
for some $r>0$. In the above conditions, 
the inequalities are understood component-wise. 
\end{thm}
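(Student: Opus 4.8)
The plan is to generalize the Shu--Osher convex-combination argument — used above for the two-derivative building block and sketched for Runge--Kutta methods — to the full multistage form \eqref{MSMDvector}. The goal is to rewrite every stage as a genuine convex combination of the starting value $u^n$, of forward-Euler building blocks $y^{(j)} + \frac{\dt}{r}F(y^{(j)})$, and of second-derivative building blocks $y^{(j)} + \frac{K^2\dt^2}{r^2}\dot{F}(y^{(j)})$, each of which is individually bounded in norm by $\|y^{(j)}\|$ via \eqref{FE} and \eqref{vanishing}. Once this convex decomposition is in hand, a straightforward induction on the stage index delivers the monotonicity bound.

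Concretely, I would introduce the matrix $M = I + rS + \frac{r^2}{K^2}\hat{S}$ and add $rS\vy + \frac{r^2}{K^2}\hat{S}\vy$ to both sides of \eqref{MSMDvector}. Regrouping the first- and second-derivative contributions with the appropriate scalings gives
\[
M\vy = \ve u^n + rS\Bigl(\vy + \frac{\dt}{r}F(\vy)\Bigr) + \frac{r^2}{K^2}\hat{S}\Bigl(\vy + \frac{K^2\dt^2}{r^2}\dot{F}(\vy)\Bigr).
\]
Because $S$ and $\hat{S}$ are strictly lower triangular, $M$ is lower triangular with unit diagonal, hence invertible with a lower-triangular inverse; this both guarantees that $M^{-1}$ exists and keeps the decomposition explicit stage-by-stage, so the induction is well founded. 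Multiplying through by $M^{-1}$ exhibits $\vy$ as a linear combination whose coefficient matrices are exactly $M^{-1}\ve$, $rM^{-1}S$, and $\frac{r^2}{K^2}M^{-1}\hat{S}$, i.e.\ the left-hand sides of \eqref{SSPcondition1}, \eqref{SSPcondition2}, \eqref{SSPcondition3}. The hypotheses make all these weights nonnegative, while the row sums collapse through $M^{-1}M\ve = \ve$, so each stage is a true convex combination.

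It then remains to bound the two building blocks. The forward-Euler term $y^{(j)} + \frac{\dt}{r}F(y^{(j)})$ satisfies \eqref{FE} precisely when $\frac{\dt}{r}\le \DtFE$, and writing $\frac{K^2\dt^2}{r^2} = \bigl(\frac{K\dt}{r}\bigr)^2$ shows that the second-derivative term satisfies \eqref{vanishing} precisely when $\frac{K\dt}{r}\le K\DtFE$; both inequalities reduce to the single restriction $\dt \le r\DtFE$. Feeding these bounds into the convex decomposition and inducting on the stage index (base case $y^{(1)} = u^n$) yields $\|y^{(i)}\|\le\|u^n\|$ for every $i$, and in particular for the final component $u^{n+1}$. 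I expect the main obstacle to be essentially bookkeeping: matching the scalings in the second-derivative building block, and in particular carrying the $K^2$ and the squaring through correctly so that its stability threshold coincides with $r\DtFE$ rather than some other multiple, together with confirming the invertibility and lower-triangularity of $M$ that underpin the decomposition.
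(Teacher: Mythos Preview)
Your proposal is correct and follows essentially the same Shu--Osher convex-combination argument as the paper: add $rS\vy + \frac{r^2}{K^2}\hat{S}\vy$ to both sides of \eqref{MSMDvector}, invert $M = I + rS + \frac{r^2}{K^2}\hat{S}$, and use the nonnegativity hypotheses together with $M^{-1}M\ve = \ve$ to exhibit each stage as a convex combination of forward-Euler and second-derivative building blocks. If anything, you are slightly more careful than the paper in noting explicitly the lower-triangular structure that guarantees invertibility and supports the stage-by-stage induction.
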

\begin{proof} We begin with the method \eqref{MSMDvector}, and 
add the terms $r S \vy$ and $\hat{r} \hat{S} \vy$ to both sides to obtain
\begin{eqnarray*}
\left( I +r S  + \hat{r} \hat{S} \right)  \vy &=& u^n \ve + r S \left( \vy + \frac{\Delta t}{r}  F(\vy) \right)
 + \hat{r} \hat{S}  \left( \vy +  \frac{\Delta t^2}{\hat{r}} \dot{F}(\vy) \right),   \\
 \vy & = & R (\ve u^n) + P \left( \vy + \frac{\Delta t}{r}  F(\vy) \right) + Q  \left( \vy +  \frac{\Delta t^2}{\hat{r}} \dot{F}(\vy) \right),
\end{eqnarray*}
where \[
	R  =  \left( I +r S  + \hat{r} \hat{S} \right)^{-1}, \; \; \; \; \; \;
	P  =  r R S,  \; \; \; \; \; \;
	Q =   \hat{r} R  \hat{S}. 
\]
%These three terms, $P$, $Q$, and $R$,  multiply the solution $u^n$, its forward Euler step, and its second derivative term. 
If the elements of $P$, $Q$, and $R \ve$ are all non-negative, and if $ R + P + Q = I$, then these 
 three terms describe a convex combination of terms which are SSP, and the resulting value is SSP as well
\[  \| \vy \| \leq  R \|\ve u^n\| + P \| \vy + \frac{\Delta t}{r}  F(\vy) \| + Q  \| \vy +  \frac{\Delta t^2}{\hat{r}} \dot{F}(\vy) \| ,\]
under the time-step restrictions $ \dt \leq r \DtFE$ and $ \dt \leq K \sqrt{\hat{r}}  \DtFE$.
As we observed  above,  the optimal time-step is given when these two are set equal,  so we require  $r=  K \sqrt{\hat{r}}$.
Conditions   \eqref{SSPcondition1}--\eqref{SSPcondition3} now ensure that $P \geq 0 $, $Q\geq 0$, and $R \ve \geq 0$  component-wise for 
$\hat{r} = \frac{r^2}{K^2}$, and our   method \eqref{MSMDvector} preserves the strong stability condition 
$\| u^{n+1} \| \leq \| u^n \|$ under the time-step restriction
$\Delta t \leq r \Delta t_{FE} $. 
\end{proof}

This theorem gives us the conditions for the method \eqref{MSMDvector} to be SSP for any  the time-step $\dt \leq r \DtFE$. This 
allows us to formulate the search for optimal SSP two-derivative methods as an optimization problem,
similar to \cite{ketcheson2008, Ketcheson2009, SSPbook2011}, 
where the aim is  to find $\sspcoef = \max r$ such that the  relevant order conditions (from Section \ref{MSMDoc}) and SSP conditions 
\eqref{SSPcondition1}-\eqref{SSPcondition2}
are all satisfied. 
 Based on this, we wrote a {\sc Matlab} optimization code for finding optimal two-derivative multistage methods \cite{MSMDoptcode}, 
formulated along the lines of  David Ketcheson's code \cite{ketchcodes} for finding optimal SSP multistage multistep methods in \cite{tsrk, msrk}. 
We used this  to find optimal SSP multistage two-derivative methods of order up to $p=5$. However, we also used our observations on the resulting
methods to formulate closed form representations of the optimal SSP multistage two-derivative methods. We present both
the numerical and closed-form optimal methods in the following section.

%\begin{eqnarray} \label{SSPcondition}
%\left( I +r S  + \hat{r} \hat{S} \right)^{-1}  \ve  \geq  0  \\
%r  \left( I +r S  + \hat{r} \hat{S} \right)^{-1} S \geq  0  \\
% \frac{r^2}{K^2}   \left( I +r S  + \hat{r} \hat{S} \right)^{-1} \hat{S} \geq 0 
%\end{eqnarray}

%\noindent{\bf Note:}
\begin{rmk}
An alternative approach to defining a MSMD SSP method is to begin with  spatial discretization
that satisfies the ``Taylor series'' condition, defined by
\begin{eqnarray} \label{TS}
\|u^n + \Delta t F(u^n) + \frac{1}{2} \Delta t^2 \dot{F}(u^n) \| \leq \| u^n  \| 
\; \; \; \; \; \mbox{for}  \; \; \; \; \;  \Delta t \leq \Delta t_{TS} = K_2  \Delta t_{FE}.
\end{eqnarray}
This condition replaces \eqref{vanishing}, and allows us to rewrite \eqref{MSMD} as convex combinations of 
forward Euler and Taylor series steps of the form \eqref{TS}. A similar optimization problem can be defined based
on this condition. This approach was adopted in \cite{Nguyen-Ba2010}.
\end{rmk}
%\begin{eqnarray*}
%\left( I +r A  - 2 r \hat{r} \hat{A} + 2 \hat{r}^2 \hat{A} \right)  y &=& 
%u^n + r  (A - 2 \hat{r} \hat{A} ) \left( y + \frac{\Delta t}{r}  F(y) \right)
% + 2 \hat{r}^2 \hat{A}  \left( y +  \frac{\Delta t}{\hat{r}} F(y) +  \frac{\Delta t^2}{2 \hat{r}^2} \dot{F}(y) \right)   \\
% y & = & R_1 u^n + R_2  \left( y + \frac{\Delta t}{r}  F(y) \right) +
% R_3  \left( y +  \frac{\Delta t}{\hat{r}} F(y) +  \frac{1}{2}  \frac{\Delta t^2}{\hat{r}^2} \dot{F}(y) \right) 
%\end{eqnarray*}
%where
%\[ R_1 = \left( I +r A  - 2 r \hat{r} \hat{A} + 2 \hat{r}^2 \hat{A} \right)^{-1} \; \; \; \; \; \;
%R_2 = r R_1  (A - 2 \hat{r} \hat{A} ) \; \; \; \; \; \;
%R_3 = 2 \hat{r}^2 R_1 \hat{A} \]
%Once again, we can formulate an SSP optimization problem so that the time-step is maximized while the 
%terms $R_1 \ve$, $\R_2$, and $\R_3$ are  nonnegative.
This condition is more restrictive than what we consider in the present work.
Indeed, if a spatial discretization satisfies conditions
\eqref{FE} as well as \eqref{vanishing}, then it will also satisfy condition \eqref{TS}, with $K_2= K \sqrt{K^2 +2 } - K$.
However, some methods of interest
cannot be written using a Taylor series  decomposition,
including the two-stage fourth order method in \eqref{2s4p}.
For these reasons, we do not explore this approach further, but we point out that
we have experimented with this alternative formulation to generate some optimal SSP methods. 
Henceforth, we restrict our attention to spatial discretizations that satisfy \eqref{FE} and \eqref{vanishing}.

\section{Optimal SSP multiderivative methods}

\subsection{Second order methods}
Although we do not wish to use second order methods in our computations, it is interesting to consider the strong stability
properties of these methods both as building blocks of higher order methods and as simple example that admit
optimal formulations with simple formulas.

\noindent{\bf One stage methods.}  The second-order Taylor series method,
\begin{eqnarray} \label{TS2}
 u^{n+1}  =  u^n +  \Delta t F(u^n) + \frac{1}{2} \Delta t^2 \dot{F}(u^n),
 \end{eqnarray}
is the unique one-stage second-order method. 
%To observe the SSP properties of this method, notice that 
%\[ u^{n+1} = a \left( u^n+\frac{\Delta t}{a} F(u^n) \right) +
%(1-a) \left( u^n + \frac{\Delta t^2}{2 (1-a)} \dot{F}(u^n) \right),\]
%so that the time-step required is 
%\[ \Delta t \leq a  \Delta t_{FE} \; \; \; \; \mbox{and} \; \; \; \; \Delta t^2 \leq 2(1- a) K^2 \Delta t_{FE}^2.\]
%Matching these two conditions gives us $\alpha =  \sspcoef =  K \sqrt{K^2 + 2 } -  K^2.$  This is the bound
 Theorem 1 
for the building block \eqref{BBex} with coefficients $\alpha=1$ and $\beta= \frac{1}{2}$ gives the condition
$\dt \leq \sspcoef  \DtFE$ with 
$ \sspcoef =  K \sqrt{K^2 + 2 } -  K^2.$ 
 Unlike the SSP single derivative
Runge--Kutta methods, the SSP coefficient is not just dependent on the time-stepping method, but also on the 
value $K$ which comes from the second derivative condition \eqref{vanishing}.
 As noted above,  the Taylor series method can serve as the basic building block for two-derivative methods,
just as the first-order FE can be used to build higher-order RK integrators.

\noindent{\bf Two stage methods.} 
The optimal SSP two stage second order methods depends on the value of $K$ in \eqref{vanishing}.
A straightforward SSP analysis via convex combinations, and solving the equations for the order conditions 
allows us to formulate the optimal methods without using the optimization code \cite{MSMDoptcode}. However, we used the 
optimization code to verify our results.

 If $K \leq  \sqrt{\frac{2}{3}}$  the optimal  methods are of the form
\begin{eqnarray} \label{2s2pKsmall}
u^* & = & u^n + \frac{1}{r} \Delta t F(u^n),  \nonumber \\
u^{n+1} & = & u^n + \frac{1}{2} \Delta t  \left( F(u^n) +  F(u^*) \right)
	+ \frac{r-1}{2r} \Delta t^2 \dot{F}(u^n),
\end{eqnarray}
where 
\[ r = \frac{1}{2} \left( 1 - K^2 + \sqrt{1 + 6 K^2 + K^4} \right) .\]
These methods are SSP for $\sspcoef=r$, where 
$r > 1$ whenever $K > 0$. Notice that if $K = 0$ we have  $r=1$ and our method
reduces to the standard two stage second order Runge--Kutta method.

%An important factor in the implementation of these methods is the decomposition into the Shu-Osher form
%to take advantage of the SSP property. The decomposition depends on $K$ and the corresponding $r$.
%The first stage of this method is just a forward Euler step and does
%not require any manipulation. The second stage decomposes into
%\begin{eqnarray*}
%u^{n+1} & = & 
%%\left(  \frac{r-r^2}{2 K^2} - \frac{r}{2} +1  \right) U^n +
% \frac{r}{2}  \left( u^* + \frac{\Delta t}{r} F^*  \right) +
%  \frac{r^2-r }{2 K^2}  \left( u^n + \frac{K^2  }{r^2} \Delta t^2 \dot{F}^n  \right)  .
%\end{eqnarray*}

As $K$ increases, the time-step restriction imposed by the condition \eqref{vanishing} is alleviated, and consequently the 
optimal method includes more of the second derivative terms. If
$K \geq \sqrt{\frac{2}{3}}$ we get an optimal method 
\begin{eqnarray} \label{2s2pKlarge}
    u^*     & = & u^n + \frac{1}{2} \Delta t F(u^n) + \frac{1}{8} \Delta t^2 \dot{F}(u^n), \nonumber \\
    u^{n+1} & = & u^* + \frac{1}{2} \Delta t F(u^*) + \frac{1}{8} \Delta t^2 \dot{F}(u^*),
\end{eqnarray}
which is simply two Taylor series steps with $\frac{1}{2} \Delta t$ in each one, and so
has $\sspcoef = 2 K \sqrt{K^2 + 2 } - 2 K^2$.
We see in Figure \ref{fig:2s2p} the SSP coefficient $\sspcoef$ vs. $K$ for the two methods above (in  green and red respectively),
and for many numerically optimal methods in blue.

\begin{figure}[!ht]
   \centering
    \begin{minipage}{.425\textwidth}
        \centering
        \includegraphics[width=1.1\textwidth]{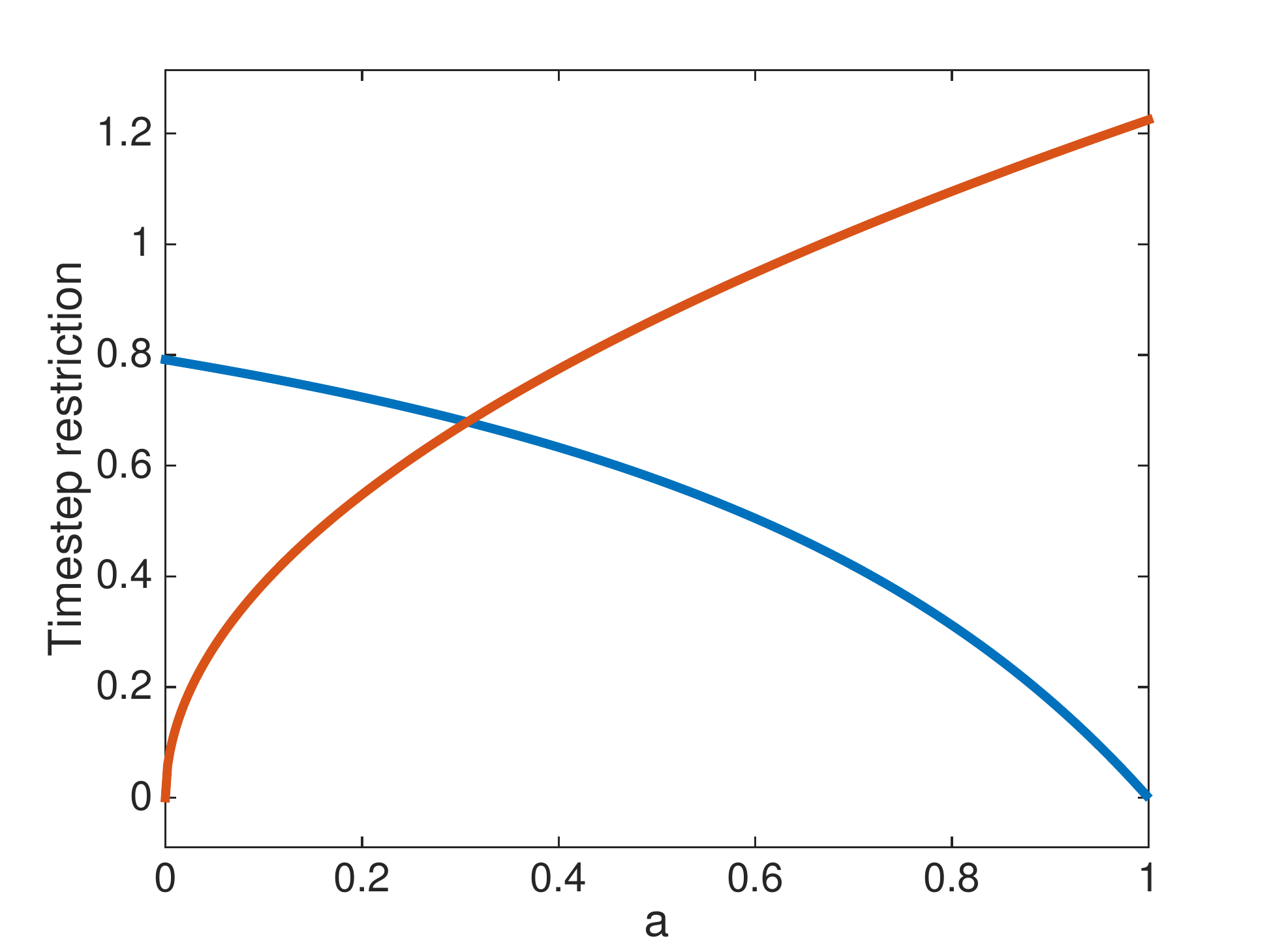}
\caption{The time-step restriction for the two-stage fourth order method is the minimum of two terms, one of which
is decreasing while the other is increasing in $a$.}
\label{Fig2s4pSSP} 
    \end{minipage}% 
    \hspace{0.2in}
    \begin{minipage}{0.425\textwidth}
        \centering
        \includegraphics[width=1.1\linewidth]{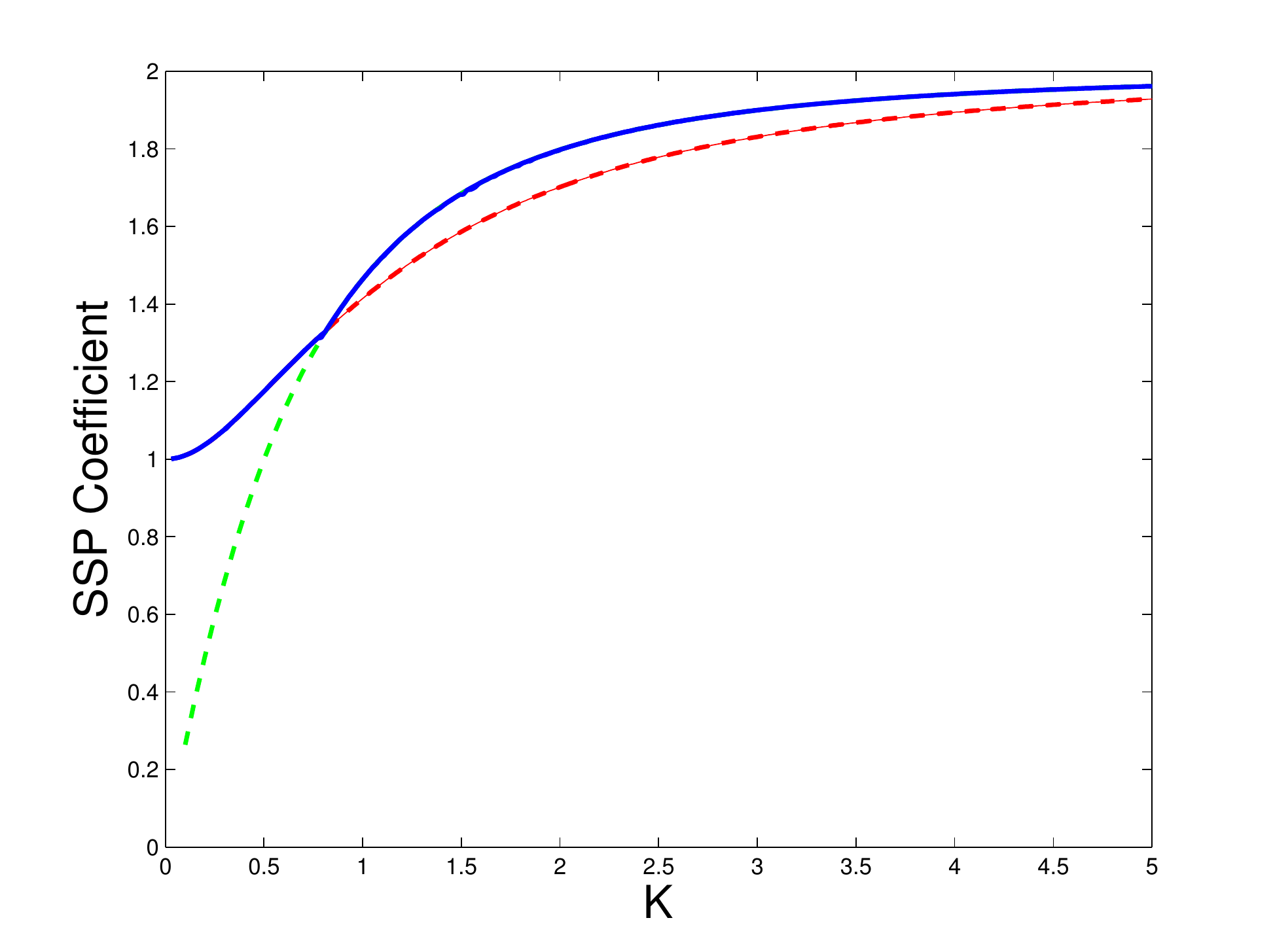}
\caption{The SSP coefficient $\sspcoef$ as a function of the coefficient $K$ in \eqref{vanishing}
for the optimal two-stage two-derivative second order methods given in Section 3.1. }
\label{fig:2s2p}
\end{minipage}
\end{figure}

Note that \eqref{2s2pKlarge} has four function evaluations compared to the three function 
evaluations above in \eqref{2s2pKsmall}. If we assume all the function evaluations cost the same, it will never pay off to use
the second method, as the first method is always more efficient. However, if one has a special case 
where the cost of computing $\dot{F}$ is negligible the second method may still be worthwhile.

For this method and all others, once we have the optimal Butcher arrays and the SSP coefficient $\sspcoef=r$, 
we can easily convert them to the Shu-Osher form as follows:
\begin{verbatim}
% Given A, Ahat, b, bhat, and r, the Shu-Osher matrices are given by
    z=0.0*b;  I=eye(3);e=ones(3,1);
    S=[A,z; b',0]; Shat=[Ahat,z; bhat',0];
    Ri=(I+r*S+(r^2/k^2)*Shat);    
    v=Ri\e;
    P = r*(Ri\S);
    Q= r^2/k^2*(Ri\Shat);
\end{verbatim}

\subsection{Third order methods}
Many  two-stage two-derivative third order methods exist. 
%The family of two-stage two-derivative third order methods includes such
%methods as \cite{}
%\begin{align*}
%U^*     &= U^n+\Delta t F(U^n)+ \frac{\Delta t^2}{2} \dot{F}(U^n)\\
%U^{n+1} &= U^n +\frac{2}{3}  \Delta t  F(U^n) +  \frac{1}{3}  \Delta t  F(U^*) 
%+\frac{\Delta t^2}{6}( \dot{F}(U^n))
%\end{align*}
%The first stage is the Taylor series method, and as $C_1 \rightarrow \infty$ we have $\sspcoef \rightarrow 2$. 
%The second stage can be rewritten as 
%\begin{eqnarray*}
%U^{n+1} & = & U^n +\frac{2}{3}  \Delta t  F(U^n) +  \frac{1}{3}  \Delta t  F(U^*) 
%                  +\frac{\Delta t^2}{6}( \dot{F}(U^n)) \\
%        & = & (1-\alpha) U^n + \alpha  U^* + (\frac{2}{3} - \alpha )  \Delta t  F(U^n) +  \frac{1}{3}  \Delta t  F(U^*) 
%              +\left( \frac{1}{6}- \frac{1}{2} \alpha \right)  \Delta t^2 \dot{F}(U^n) \\
%        & = & (1- \alpha - \beta)  \left( U^n  + \frac{\frac{2}{3}-  \alpha }{1- \alpha - \beta}  \Delta t^2 \dot{F}(U^n) \right)
%            + \beta \left( U^n  + \frac{\frac{1}{6}- \frac{1}{2} \alpha }{\beta}  \Delta t^2 \dot{F}(U^n) \right)
%            + \alpha \left( U^* + \frac{1}{3 \alpha }  \Delta t  F(U^*) \right)
%\end{eqnarray*}
%As $C_1 \rightarrow \infty$, the second and third terms don't contribute to the time-step constraint, 
%so we focus only on the first term,
%which gives $\sspcoef =  \frac{1- \alpha - \beta}{\frac{2}{3}-  \alpha }  =   2$ if we pick
%$\beta = 0$ and $\alpha = \frac{1}{3}$.
%However, this method is not optimal in terms of SSP.
As above, the  optimal $\sspcoef$ depends on the value of $K$ in \eqref{vanishing}. 
Through numerical search using our optimization code \cite{MSMDoptcode}, we found optimal methods for the range $0.1 \leq K \leq 5$. 
Figure \ref{2stage} (blue line) shows the SSP coefficient $\sspcoef$ vs. $K$.
We also solved the  order conditions explicitly and analyzed the SSP conditions 
\eqref{SSPcondition1}-\eqref{SSPcondition3}
to find the values of the coefficients of these methods as functions of 
$r= \sspcoef $ and $K$. 
These methods all have the form
\begin{eqnarray} \label{2s3p}
u^*     &=& u^n+a \Delta t F(u^n)+ \hat{a} \Delta t^2 \dot{F}(u^n), \nonumber \\
u^{n+1} &=& u^n +b_1  \Delta t  F(u^n) +  b_2 \Delta t  F(u^*) +\hat{b_1}
    \Delta t^2  \dot{F}(u^n) + \hat{b_2} \Delta t^2  \dot{F}(u^*),
\end{eqnarray}
where the coefficients satisfy
\begin{eqnarray}
a & =  \frac{1}{r} \left(K \sqrt{K^2+2} - K^2 \right) \; \; \; \; \; & \hat{a}  =  \frac{1}{2} a^2 \nonumber \\ 
b_2 & =  \frac{2 K^2 (1 - \frac{1}{r} ) + r } {  K \sqrt{K^2+2} + K^2} - \frac{r^2}{3 K^2} \; \; \; \; \;
& b_1  =  1- b_2 \\
\hat{b}_1 & =  \frac{1}{2} - \frac{1}{2} a b_2 - \frac{1}{6 a} \; \; \; \; \;
&\hat{b}_2  =  \frac{1}{6 a} - \frac{1}{2} a b_2 \nonumber
\end{eqnarray}

Note that the first stage is always a Taylor series  method with $\Delta t$ replaced by $a \Delta t$, which is underscored by the fact that 
SSP coefficient is given by $r  =  \frac{1}{a} \left(K \sqrt{K^2+2} - K^2 \right)$. The SSP conditions
\eqref{SSPcondition1}-\eqref{SSPcondition3}
 provide the restrictions on the size
of $r$, and thus on $\sspcoef=r$. We observe that the condition that restricts us most here is the non-negativity of $Q_{3,1}$, 
and so we must select a value of $r$ such that this value is zero. To do this, we select $r$ to be the smallest positive root of 
$p_0 + p_1 r + p_2 r^2 + p_3 r^3 $
where the coefficients $p_i$ are all functions of $K$
\begin{eqnarray*}
 p_0  & = & 2 K ( \sqrt{K^2+2} - 3K) + 4 K^3 ( \sqrt{K^2+2} - K)  \\
    p_1&=& -a_0 ,\; \; \; \; \; p_2=(1-a_0)/(2 K^2), \; \; \; \; \;
    p_3= -\frac{\frac{a_0}{2K} + K}{6 K^3}. 
    \end{eqnarray*}
Some examples of the value of $r$ as a function of $K$ are given in Table \ref{table2s3p}. The {\sc Matlab} script for this method
is found in Appendix B.

 \begin{table}[!h] %[9]{r}{0.35\textwidth} \vspace{-.0in} \label{table2s3p}
\hspace{0.65in} {\small \begin{tabular}{|lllllllllllllll|} \hline
K & 
0.25 & 0.4 & 0.5 & 0.6 & 0.7 & 0.8 & 1.0 & 1.25 & 1.5 & 1.75 & 2.5 & 3 & 3.5 & 4 \\
r & 0.48  & 0.71 & 0.84   &0.94  &1.03     & 1.11    &1.23      & 
1.33  &1.39 &1.44 & 1.51 & 1.54  &1.55  &1.56              \\  \hline
\end{tabular}}
\caption{The SSP coefficient $\sspcoef=r$ for each $K$ in the two-stage third order method \eqref{2s3p}.}
\label{table2s3p}
\end{table}

The Shu-Osher representations of the methods can be easily obtained by using $r$ and $K$ to solve for $\hat{r}$, $R, P$, and $Q$. 
For example, for $K  = \frac{1}{\sqrt{2}}$ we have $r=1.04$ and the method in Butcher form has
   \[ \begin{array}{l}
   a =  0.594223212099088,   \\
 \hat{a} = 0.176550612898679, \end{array}
\; \; \; \; \;
b = \left[ \begin{array}{l}
0.693972512991841  \\ 0.306027487008159  \end {array} \right] ,  \; \; 
 \hat{b} = \left[ \begin{array}{l}
 0.128597465450411 \\  0.189553898228989 \end {array} \right].
 \]
 The optimal Shu Osher formulation    for these values is given by $R \ve = (1,0,0)^T$,
     \[   P = \left[ \begin{array}{l l l}
  0&0&0  \\  0.618033988749895 &0&0 \\  0.271611333775367 & 0.318290138472780 & 0\\ \end {array} \right]\]
\[  Q = \left[ \begin{array}{l l l}
  0&0&0  \\   0.381966011250105 &0&0 \\ 0 &  0.410098527751853    & 0\\ \end {array} \right] .\]

\subsection{Fourth order methods}

\subsubsection{Fourth order methods: The two-stage fourth-order method}

The two-stage two-derivative fourth order method \eqref{2s4p} is unique; there is only one set of coefficients that satisfy the fourth
order conditions for this number of stages and derivatives. The method is
\begin{align*} % \label{2s4p}
    u^*     &= u^n + \frac{\Delta t}{2} F(u^n) + \frac{\Delta t^2}{8} \dot{F}(u^n) \nonumber \\
    u^{n+1} &= u^n + \Delta t           F(u^n) + \frac{\Delta t^2}{6}( \dot{F}(u^n)+2\dot{F}(u^*)).
\end{align*}
The first stage of the method is a Taylor series method with $\frac{\Delta t}{2}$, while the second stage can be written as
a linear combination of a forward Euler and a second-derivative term (but not a Taylor series term).
The SSP coefficient of this method is larger as $K$ increases, as can be seen in Figure \ref{2stage}.

\begin{figure}[t]
    \centering
    \begin{minipage}{.425\textwidth}
        \centering
        \includegraphics[width=.9\textwidth]{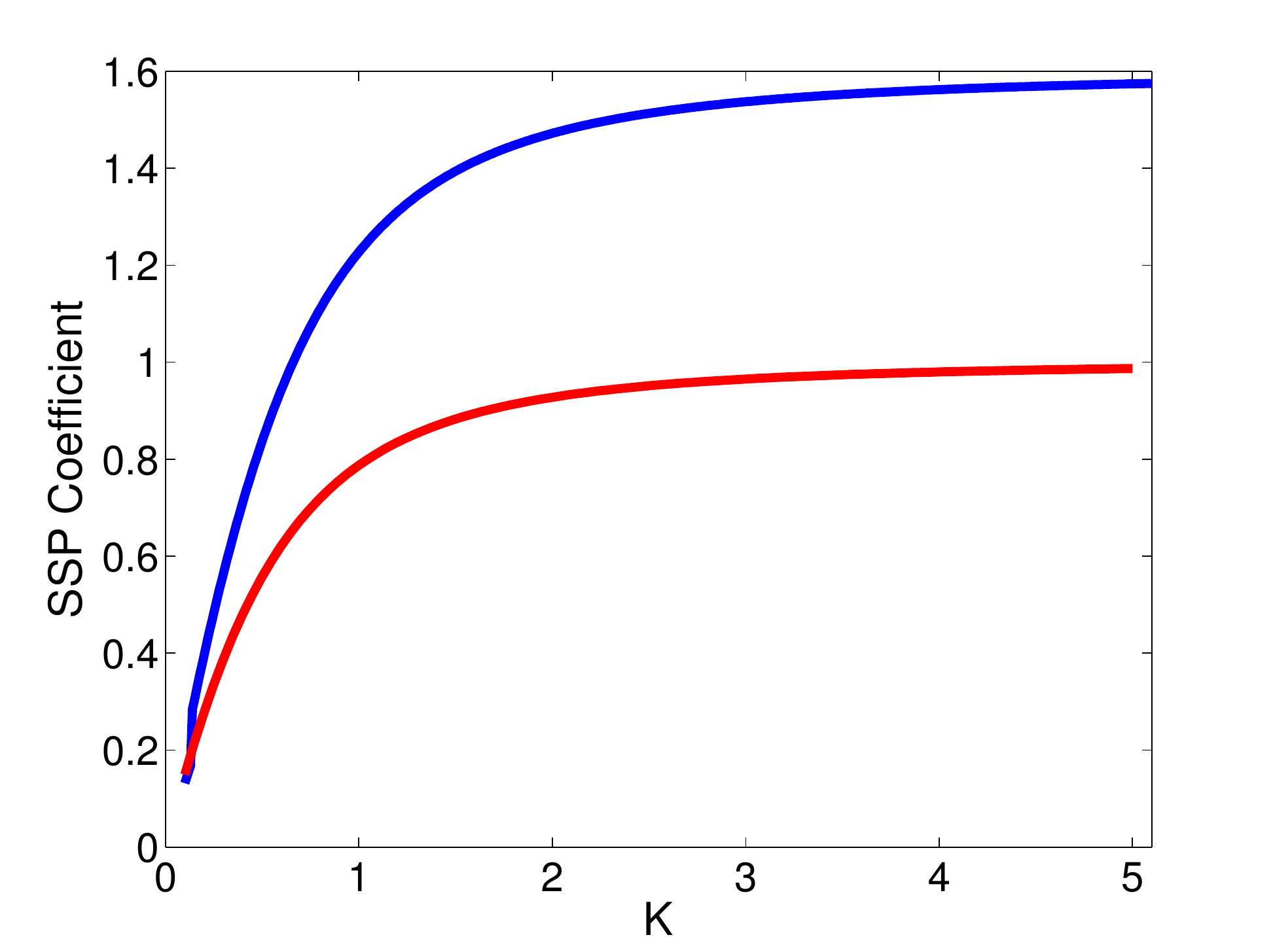}
       \caption{\small The SSP coefficient vs. $K$ for two stage methods. Third order methods are in blue, 
       the fourth order methods are red.}
        \label{2stage}
    \end{minipage}% 
    \hspace{0.2in}
    \begin{minipage}{0.425\textwidth}
        \centering
        \includegraphics[width=0.9\linewidth]{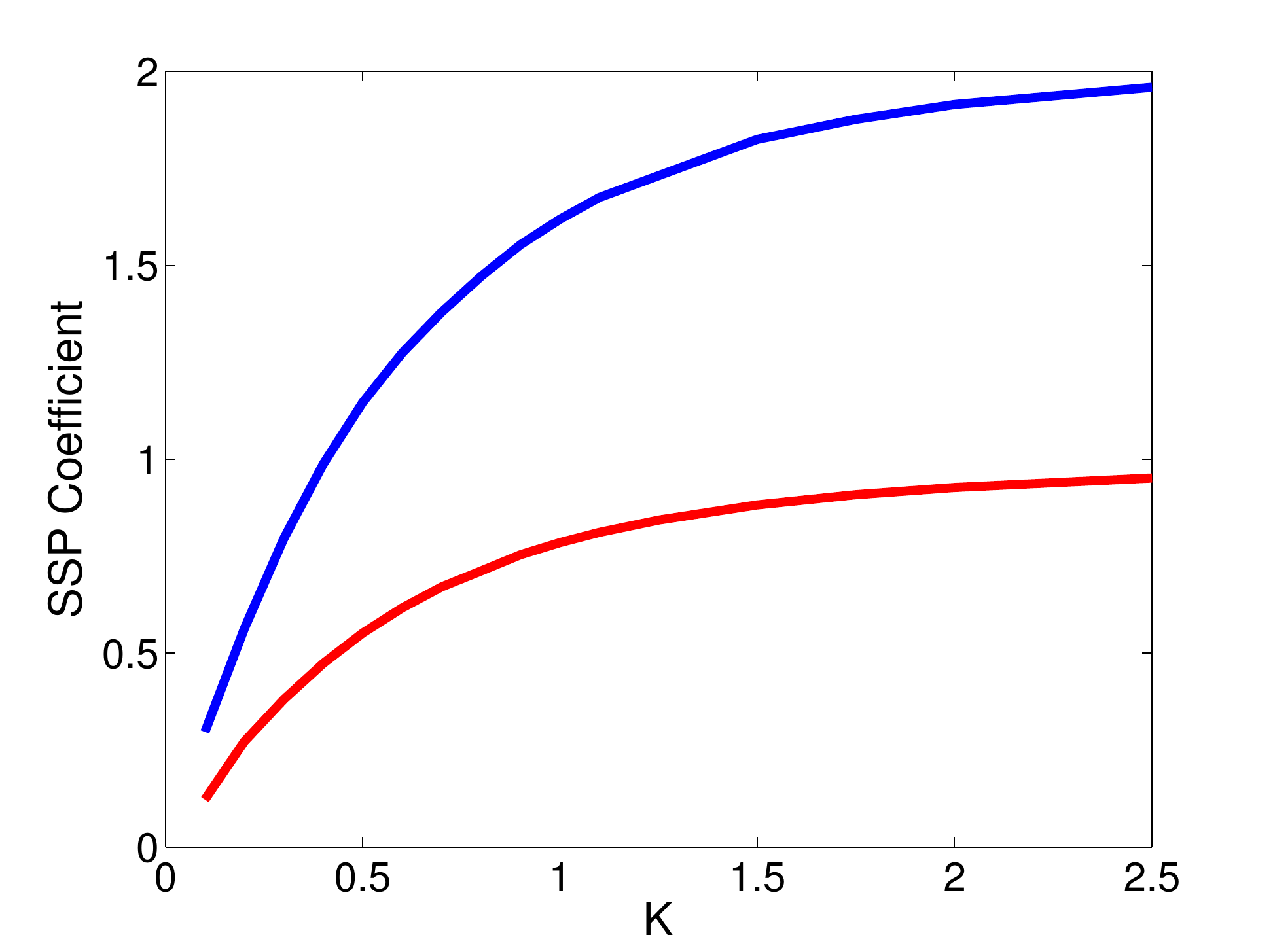}
        \caption{\small The SSP coefficient vs. $K$ for three stage methods. Fourth order methods are in blue (top line), 
        the fifth order methods in red (bottom line).}
        \label{3stage}
    \end{minipage}
\end{figure}

To ensure that the SSP conditions \eqref{SSPcondition1}-\eqref{SSPcondition3}
are satisfied, we need to select the largest $r$ so that 
all the terms are non-negative. We observed from numerical optimization that in the case 
of the two-stage fourth order method the term $(R \ve)_3$ gives the most restrictive condition:
if we choose $r$ to ensure that this term is non-negative, all the other conditions are satisfied.
Satisfying this condition, the SSP coefficient $\sspcoef=r$ is given by the smallest positive 
root of the polynomial:
\[ (R \ve)_3 = r^4 + 4 K^2 r^3 -12 K^2 r^2 - 24 K^4 r + 24 K^4.\] 
The Shu-Osher decomposition for the optimal method corresponding to this value of $K$ is
\begin{eqnarray}
u^* & = & \left( 1- \frac{4 r K^2 + r^2}{8 K^2} \right) u^n + \frac{r}{2} \left( u^n + \frac{\dt}{r} F(u^n) \right)
+ \frac{ r^2}{8 K^2}  \left( u^n + \frac{K^2}{r^2} \dt^2 \dot{F}(u^n) \right) \\
u^{n+1} &=& r \left( 1 - \frac{r^2}{6K^2} \right) \left( u^n + \frac{\dt}{r} F(u^n) \right) + 
	\frac{r^2 ( 4 K^2-r^2)}{24 K^4}  \left( u^n + \frac{K^2}{r^2} \dt^2 \dot{F}(u^n) \right) +
	\frac{r^2}{3 K^2}  				 \left( u^* + \frac{K^2}{r^2} \dt^2 \dot{F}(u^*) \right).  \nonumber
\end{eqnarray}

\subsubsection{Fourth order methods: Three  stage methods}
If we increase the number of stages to three, we can construct entire families of
methods that obtain fourth-order accuracy, and are SSP with a larger allowable time-step.
For these methods, we were not able to find closed form solutions, but our optimization code \cite{MSMDoptcode}
produced methods for various values of $K$. The SSP coefficient as a function of $K$ for these methods is given in
Figure \ref{2stage}, and we give the coefficients for selected methods in both Butcher array and Shu-Osher form 
in Appendix \ref{Appendix3s4pMethods}.

\subsection{Fifth order methods}
%\subsubsection{Fifth order methods: a family of three stage methods}
As mentioned above, it was shown that explicit SSP Runge--Kutta methods cannot have order 
$p>4$ \cite{kraaijevanger1991,ruuth2001}. This order barrier is broken by multiderivative methods. 
If we allow three stages and  two-derivative we can obtain a fifth order SSP method.
%, and in fact can be strong stability preserving.
%{\bf This is a significant result, as SSP single-derivative one-step explicit methods can not be higher than fourth order}
%.  The addition of the higher derivative and a second order condition allows this order barrier to be broken. 
The explicit  three-stage fifth order method has twelve coefficients and sixteen order conditions that need to be satisfied.
This is possible if some of the coefficients are set to zero, which allows several of the order conditions to be repetitive and
satisfied automatically.  The methods resulting from our optimization routine all had the simplified form
\begin{eqnarray} \label{3s5p}
    u^*     &= & u^n +  a_{21} \Delta t F(u^n) +  \hat{a}_{21}\Delta t^2  \dot{F}(u^n) \nonumber \\
    u^{**}  &=& u^n +  a_{31} \Delta t F(u^n)  %+  a_{32} \Delta t F(u^*)
    +  \hat{a}_{31}\Delta t^2  \dot{F}(u^n)  +  \hat{a}_{32}\Delta t^2  \dot{F}(u^*)  \\
    u^{n+1} &=& u^n +  \Delta t  F(u^n)  
    + \Delta t^2 \left( \hat{b}_1 \dot{F}(u^n)+ \hat{b}_2 \dot{F}(u^*) + \hat{b}_3 \dot{F}(u^{**}) \right). \nonumber
\end{eqnarray}
The coefficients of the three-stage fifth order method are then given as a one-parameter system, 
depending only on $a_{21}$, that are related through
\begin{eqnarray*}
\hat{a}_{21} &=& \frac{1}{2}  a_{21}^2 , \; \; \; \; \;   a_{31}    =   \frac{3/5 -a_{21}}{1-2 a_{21}}, \\ 
\hat{a}_{32}   & = & \frac{1}{10} \left(  \frac{(\frac{3}{5} -a_{21})^2}{a_{21} (1-2 a_{21})^3} - 
\frac{\frac{3}{5} -a_{21}}{(1-2 a_{21})^2}  \right), \; \; \; \; \; 
\hat{a}_{31}   =   \frac{1}{2} \frac{(\frac{3}{5} -a_{21})^2}{(1-2 a_{21})^2}  -\hat{a}_{32}, \\
\hat{b}_2  & = & \frac{2 a_{31}-1}{12 a_{21}(a_{31}-a_{21})} , \; \; \; \; \; 
\hat{b}_3   =  \frac{1-2 a_{21}}{12 a_{31} (a_{31}-a_{21})} , \; \; \; \; \; 
\hat{b}_1   =  \frac{1}{2} - \hat{b}_2 - \hat{b}_3. \\
\end{eqnarray*}

To satisfy the SSP conditions \eqref{SSPcondition1}-\eqref{SSPcondition3}, we must ensure that $(R\ve)_3$  is non-negative.  
Based on the optimization code we observed that the extreme case  of  $(R\ve)_3=0$ gives the optimal methods, 
 and we can obtain  $a_{21}$  as a function of $K$ and $r$ through
\[  a_{21} =\frac{K^6}{r^6} \left( - \frac{2}{K^4} r^5 + \frac{10}{K^4} r^4  +  \frac{40}{K^2} r^3
   - \frac{120}{K^2} r^2  -240 r + 240 \right).
\]
Now, we wish to ensure that $Q_{3,1}$ is nonnegative. The SSP coefficient $\sspcoef = r$ is then chosen 
   as the  largest positive root of
\[ Q_{3,1} = 10 r^2 a_{21}^4  - (100 K^2 + 10 r^2) a_{21}^3   +( 130 K^2  + 3 r^2 ) a_{21}^2   
- 50 K^2 a_{21}+ 6 K^2. \]
The {\sc Matlab} script in Appendix C solves for the largest $r$ that satisfies the SSP 
conditions \eqref{SSPcondition1}-\eqref{SSPcondition3}, and then computes the coefficients
of the optimal methods both in Butcher array and Shu-Osher form.
%\begin{verbatim}
%syms r
%k=sqrt(0.5)
%a21= 240*k^6*(1 -r - r^2/(2*k^2) + r^3/(6*k^2) + r^4/(24*k^4) - r^5/(120*k^4))/r^6;
%Q31= 10*r^2*a21^4 - 100*k^2*a21^3 - 10*r^2*a21^3 + 130*k^2*a21^2  + 3*r^2*a21^2 ...
%           - 50*k^2*a21 +6*k^2;
%% Solve for the roots of polynomial on the numerator of Q31.
%RC=vpasolve(simplify(r^22*Q31)==0); 
%rr=RC(find(abs(imag(RC))<10^-15));  % Find the real roots
%C= max(rr)                          % The SSP coefficient is the maximum of all real roots
%\end{verbatim}
This approach yields the same optimal methods as those
obtained by our optimization code \cite{MSMDoptcode}. In Figure \ref{Table3s5p} we show values of $a_{21}$ and $r$ for given values of $K$.

\begin{figure}
 \begin{minipage}[b]{0.5\linewidth} 
    \centering 
   \includegraphics[width=.875\textwidth]{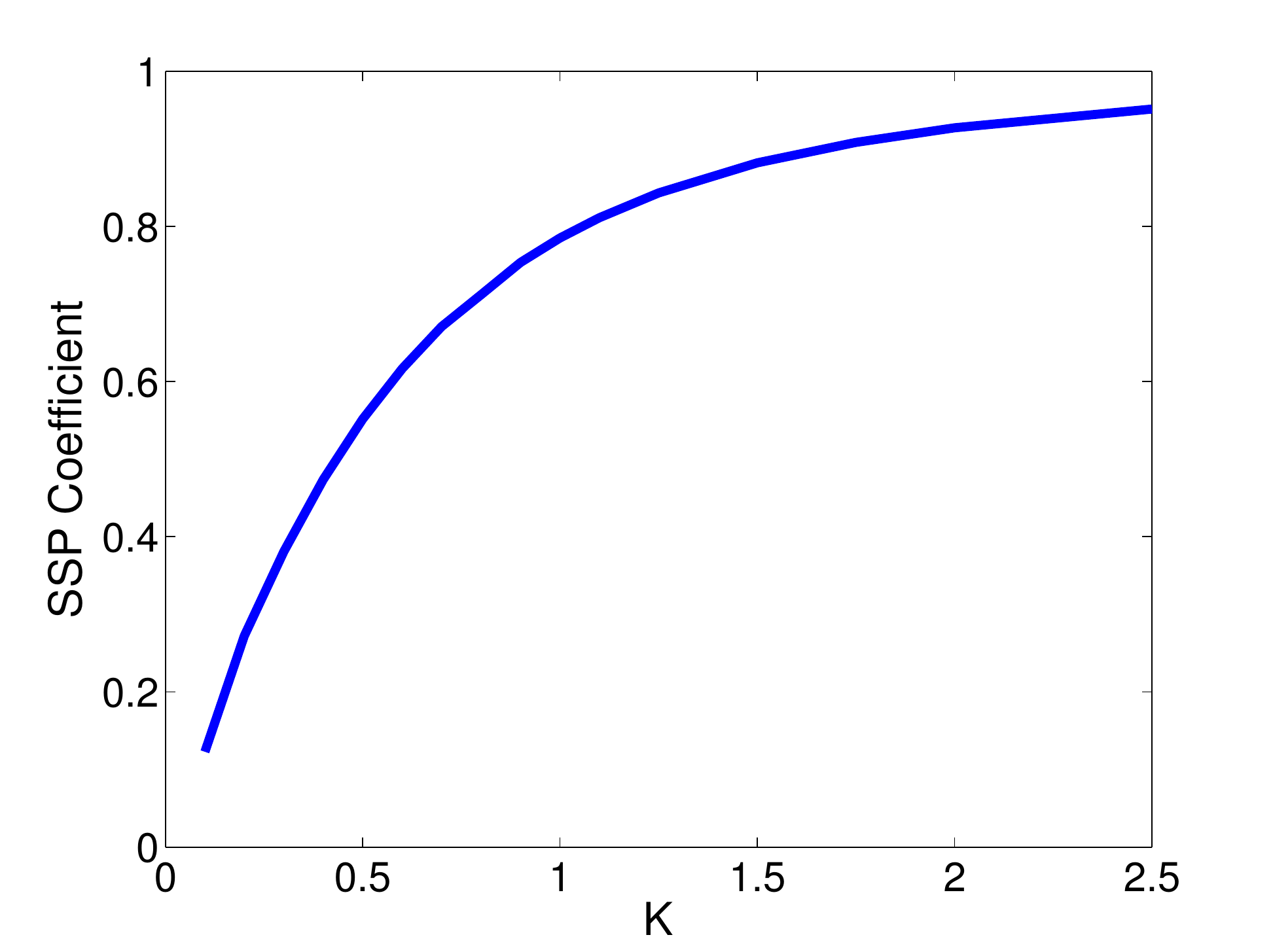}
   \par\vspace{0pt}
  \end{minipage}%
  \begin{minipage}[b]{0.5\linewidth} \hspace{-.32in}
    \centering  \vspace{-0.2in}
    \begin{tabular}{|lll||lll|} \hline
$K$ & $a_{21}$ & $\sspcoef$ & $K$ & $a_{21}$ & $\sspcoef$ \\  
\hline 
    0.1  &    0.7947   & 0.1452   &  1.1  &    0.7393   & 0.8114 \\
    0.2  &    0.7842    & 0.2722   & 1.2  &    0.7374    &0.8335\\
    0.3  &    0.7751    &0.3814   & 1.3  &    0.7359    &0.8523\\
    0.4  &    0.7674    &0.4741   & 1.4  &    0.7346   & 0.8683\\
    0.5  &    0.7609    &0.5520   & 1.5  &    0.7334    & 0.8819\\
    0.6  &    0.7555    &0.6171   & 1.6  &    0.7324    & 0.8937\\
    0.7  &    0.7510    &0.6712   & 1.7  &    0.7316    & 0.9039\\
    0.8  &    0.7472    &0.7162   &   1.8  &    0.7309   & 0.9127\\
    0.9  &    0.7441    &0.7537   &   1.9  &    0.7302   & 0.9205\\
    1.0  &    0.7415    &0.7851   &  2.0  &   0.7296    & 0.9273\\ \hline
    \end{tabular} \vspace{0.235in}
\par\vspace{12pt}
\end{minipage}
\caption{\small SSP coefficients for three-stage fifth-order methods.  Left: the SSP coefficient as a function of $K$ for three stage fifth order methods.
Right: a table of $a_{21}$ and the SSP coefficient $\sspcoef$, for different values of $K$ as defined in \eqref{3s5p}. The code
to generate the coefficients in Butcher and Shu-Osher form is given in the appendix.
}
\label{Table3s5p}
\end{figure}

Once again, the Shu-Osher decomposition is needed for the method to be SSP, and is easily obtained. 
For example, for $K  = \frac{1}{\sqrt{2}}$ we have $r=0.6747$ and the method becomes 
\begin{eqnarray} \label{3s5pShuOsher}
R \ve &=& \left[ \begin{array}{l}
1.0 \\
0.2369970626512336 \\
0.7810723816004148 \\
0.0 \\
\end {array} \right], \;
  P  =   \left[ \begin{array}{l l l l}   0&0&0&0  \\ 
0.5064804704259125 & 0& 0&0 \\
0.1862033791874200 & 0   & 0& 0\\ 
 0.5769733539128722  &0  &0 &  0\\
 \end {array} \right]  \nonumber  \\ 
     \end{eqnarray}
 \begin{eqnarray} 
%\vspace{.1in} \\
   Q & = & \left[ \begin{array}{l l l l} 
   0&0&0&0  \\ 
0.2565224669228537 & 0& 0&0 \\ 
0& 0.0327242392121651 & 0& 0\\
0.0615083849004797 & 0.0803574544380432 & 0.2811608067486047 & 0 \nonumber\\
 \end {array} \right].  
    \end{eqnarray}
  
 These coefficients as well as the  coefficients for the optimal method for any value of $K$ can be  easily 
 obtained to high precision by the {\sc Matlab} code in Appendix C.
%\subsubsection{Fifth order methods: a four stage method}

%\todo{What motivation do we want to provide to introduce the fourth-stage?  Is the real reason
%we need to evaluate $F$ at times other than $t^n$ in order to for WENO to have a better result,
%or can we construct a much better $\mathcal{C}$? ... -DS}

\section{Numerical Experiments}
\subsection{Numerical verification of the SSP properties of these methods}
\subsubsection{Example 1: Linear advection with first order TVD spatial discretization}
\label{subsec:linear-advection-first-order}

As a first test case, we consider  linear advection, $U_t - U_x =0$, 
with a first order finite difference for the first derivative and 
a second order centered difference for the second derivative defined in \eqref{eqn:low-order}
\[
	F(u^n)_j := \frac{u^n_{j+1}-u^n_j}{\Delta x} \approx U_x( x_j ), \; \; \; \; \; \;
		\mbox{and} \; \; \; \; \; \;
	\dot{F}(u^n)_j := \frac{u^n_{j+1}- 2 u^n_j + u^n_{j-1}}{\Delta x^2} \approx  U_{xx}( x_j ).
\]
Recall from \eqref{eqn:tvd-properties} that this first-order spatial discretization satisfies the
\begin{description}
\item{\bf Forward Euler condition}
$ u^{n+1}_j = u^n_j + \frac{\Delta t}{\Delta x} \left( u^n_{j+1} - u^n_j \right)$
is  TVD for $ \Delta t \leq \Delta t_{FE} = \Delta x$, and the
\item{\bf Second Derivative condition}
$ u^{n+1}_j = u^n_j + \left( \frac{\dt}{\dx} \right)^2 \left( u^n_{j+1} - 2 u^n_j + u^n_{j-1} \right) $
is TVD for $\Delta t \leq \frac{1}{\sqrt{2}} \Delta t_{FE} $.
\end{description}
Fo initial conditions, we use a step function \begin{equation}
\label{eqn:sq_wave_ic}
	u_0(x) = \left\{ \begin{array}{ll}
		1 & \text{if}\ \frac{1}{4} \leq x \leq \frac{1}{2}, \\
		0 & \text{otherwise},
\end{array} \right. 
\end{equation}
with a domain $x \in [0,1]$ and periodic boundary conditions. 
This simple example is chosen as our experience has shown \cite{SSPbook2011} that this problem often 
demonstrates the sharpness of the SSP time-step.

 For all of our simulations, we use a fixed grid of size $\Delta x = \frac{1}{1600}$, and a time-step $\Delta t = \lambda \dx$
 where we vary $0.05 \leq \lambda \leq 1$. We step each method forward by $N=50$ time-steps and compare the 
 performance of the various time-stepping methods  constructed earlier in this work, for $K = \frac{\sqrt{2}}{2}$. 
 We test this problem using  the  two stage third order  method \eqref{2s3p}, 
the two stage fourth order method \eqref{2s4p} and the three stage fourth order method in Appendix \ref{Appendix3s4pMethods}, 
the fifth order method \eqref{3s5pShuOsher}. We also consider the non-SSP two stage third order method, 
 \begin{eqnarray} \label{bad2s3p}
u^*     &=& u^n - \Delta t F(u^n)+ \frac{1}{2} \Delta t^2 \dot{F}(u^n), \nonumber \\
u^{n+1} &=& u^n  - \frac{1}{3}  \Delta t  F(u^n) +  \frac{4}{3} \Delta t  F(u^*) +
\frac{4}{3}    \Delta t^2  \dot{F}(u^n) +\frac{1}{2} \Delta t^2  \dot{F}(u^*),
\end{eqnarray}
To  measure the effectiveness of these methods, we consider the maximum observed rise in total variation, defined by
\begin{equation}
\label{eqn:increase-in-tv}
	\max_{0 \leq n \leq N-1} \left( \| u^{n+1} \|_{TV} - \| u^n \|_{TV} \right). % \quad n=0, 1, \dots N-1.
\end{equation}
We are interested in the time-step in which this rise becomes evident (i.e. well above roundoff error).
Another measure that we use is the rise in total variation compared to the total variation of the initial solution:
\begin{equation}
\label{eqn:tv-vs-inital}
	\max_{0 \leq n \leq N-1} \left( \| u^{n+1} \|_{TV} - \| u^0 \|_{TV} \right). % \quad n=0, 1, \dots N-1.
\end{equation}

\begin{figure}[t!] \hspace{0.25in} 
\includegraphics[width=0.45\textwidth]{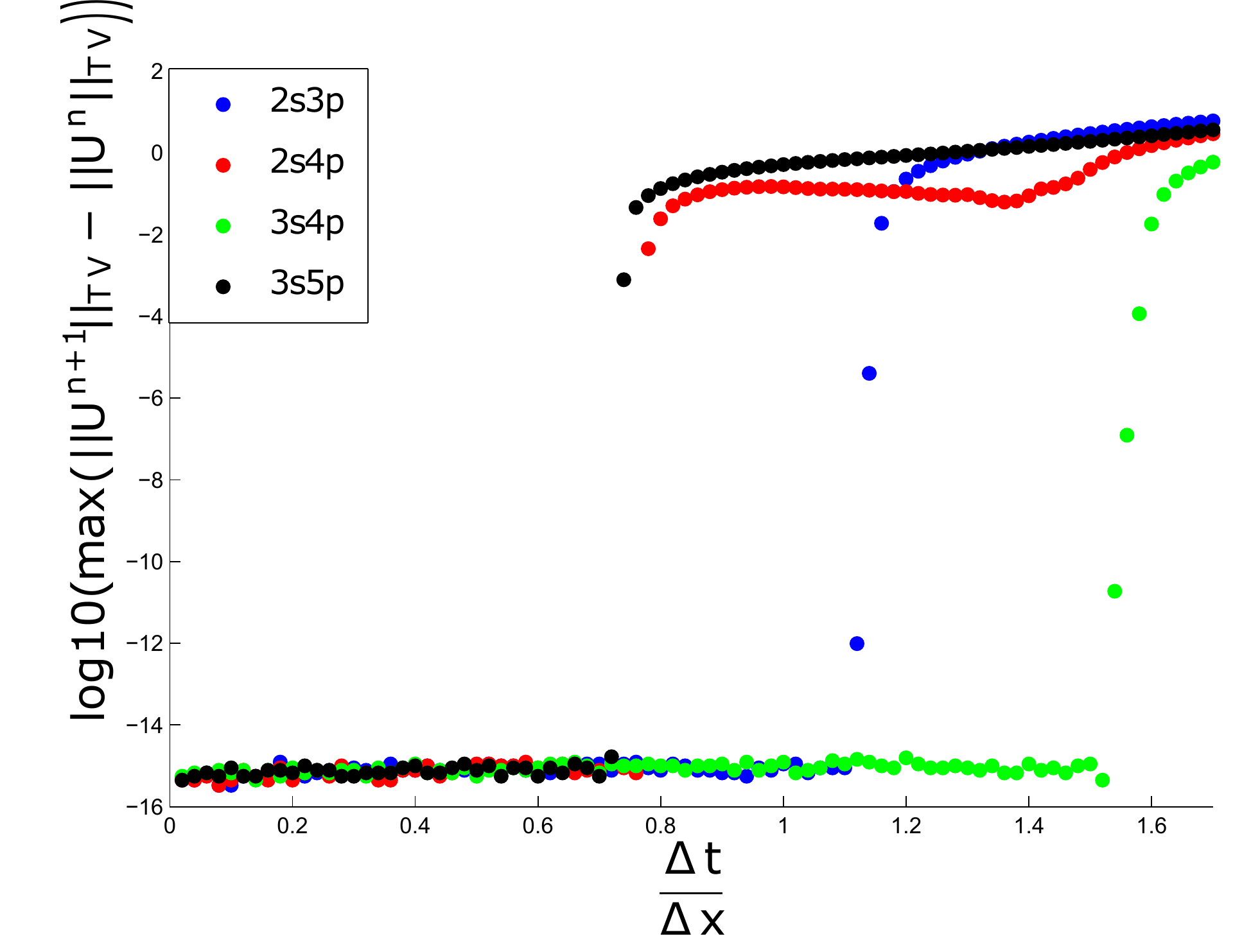}
\includegraphics[width=0.45\textwidth]{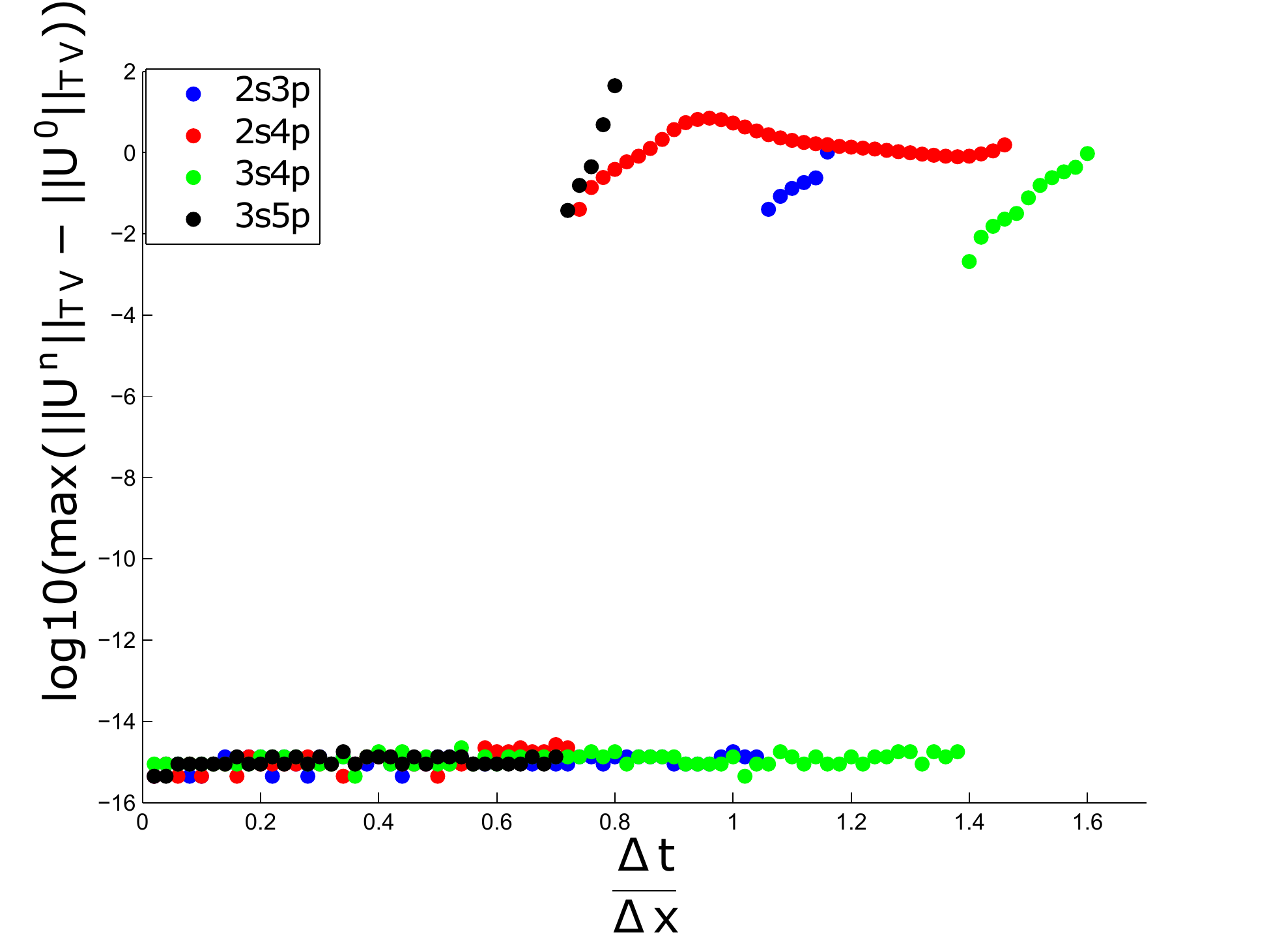} \\
%\hspace*{.325in} \includegraphics[width=0.45\textwidth]{Figures/DPDMSSPvNonSSPTrueTVD_new}
%\includegraphics[width=0.45\textwidth]{Figures/DPDMSSPvNonSSPNewTVD_new.eps}
\hspace*{.325in} \includegraphics[width=0.45\textwidth]{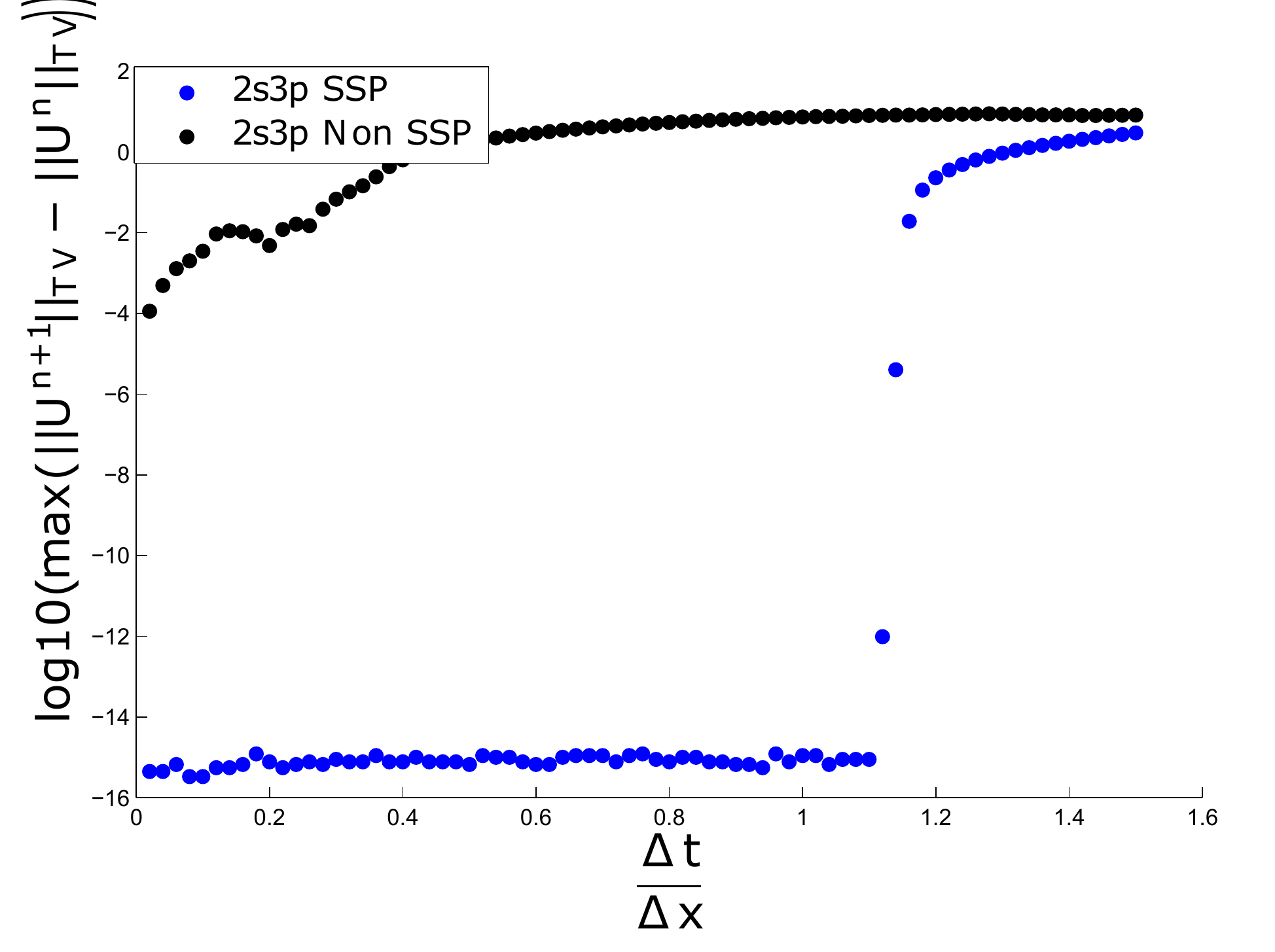}
\includegraphics[width=0.45\textwidth]{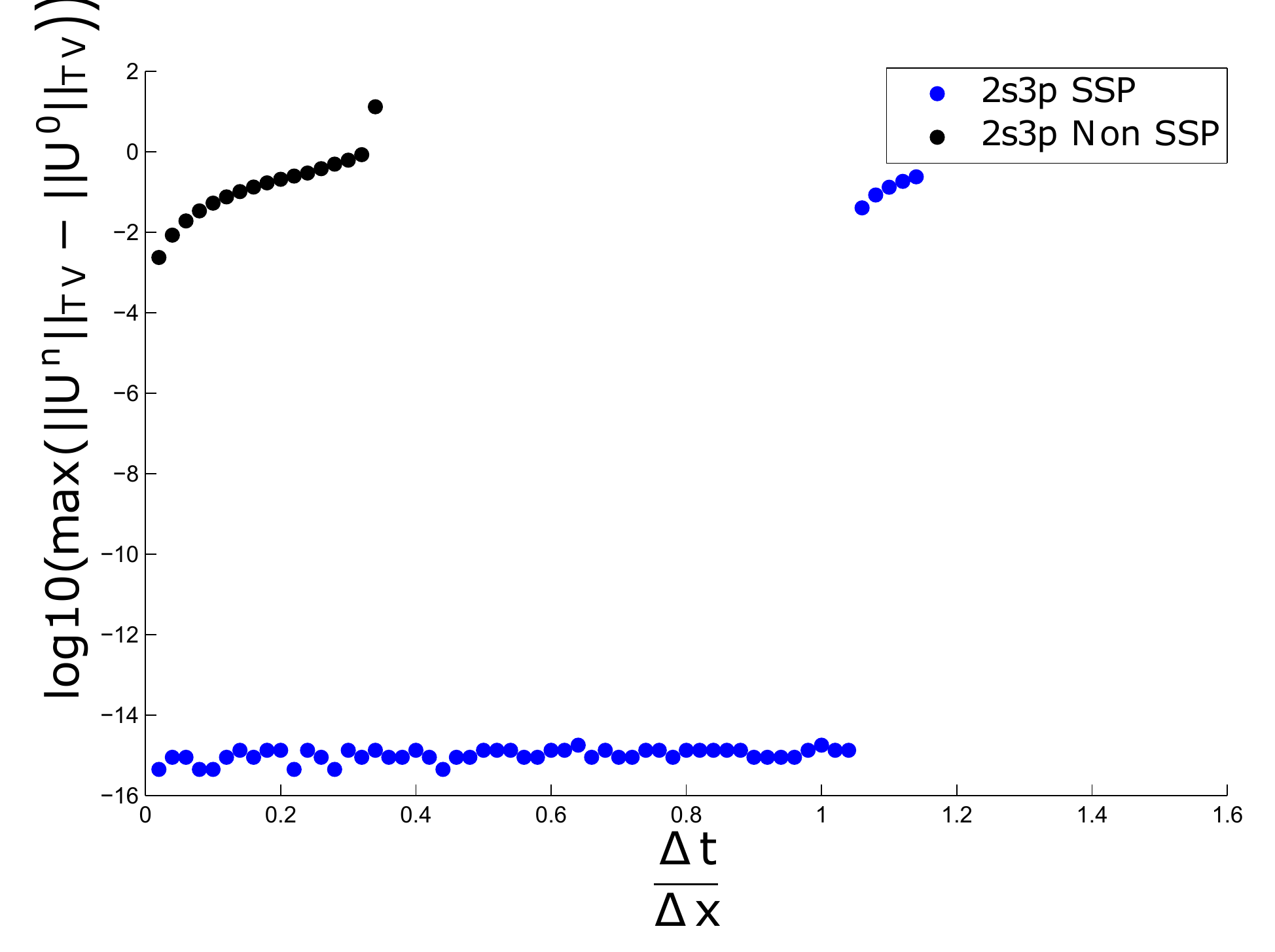}
\caption{The rise in total variation as a function of the CFL number.
On the left is the maximal per time-step rise \eqref{eqn:increase-in-tv} and on the right the maximal TV rise above
the initial TV \eqref{eqn:tv-vs-inital}.
Top: Comparison of a variety of SSP methods.
Bottom: Comparison of two-stage third order SSP and non-SSP methods.
 }
 \label{LinearTest}
\end{figure}

Figure \ref{LinearTest} (top) shows the maximal rise in total variation for each CFL value $\frac{\dt}{\dx}$. On the left we 
have the maximal per-step rise in TV \eqref{eqn:increase-in-tv} and on the right, the maximal rise in TV 
compared to the TV of the initial solution  \eqref{eqn:tv-vs-inital}. We clearly see that once the CFL value
passes a certain limit, there is a sharp jump in the total variation of the solution. We are interested in the value 
of $\frac{\dt}{\dx}$ at which the time-stepping  method no longer maintains the nonlinear stability. 
The fifth order method (black) has the most restrictive value of $\dt$ before the total variation begins to rise.
The next most restrictive is the two-stage fourth order method, followed by the two stage third order method. The two-stage
second order methods have more freedom than these methods, and so have a much larger allowable $\dt$, while the 
three stage fourth order, having the most freedom in  the choice of coefficients, outperforms all the other methods. 
Figure \ref{LinearTest} (bottom) compares the performance of the non-SSP method to the SSP method. This graph clearly shows 
the need for the SSP property of the time-stepping, as the absence of this property results in the loss of the TVD
property for any time-step.

\begin{table}[t] %{r}{0.5\textwidth} %[t]
\hspace{0.8in} \begin{tabular}{|c|c|c|c||c|c|c|c|}  \hline 
Stages & Order & Predicted $\sspcoef$ & Observed $\sspcoef$ 
& Stages & Order & Predicted $\sspcoef$ & Observed $\sspcoef$\\
\hline
%  Forward & Euler   &  1.0000 &   1.0000 \\
% Second & Derivative &   0.7071 &   0.7071 \\
   1 &   2 &   0.6180 &  0.6180 &   2 &   4 &  0.6788&  0.7320\\
   2 &   2 &   1.2807&   1.2807  &   3 &   4 &   1.3927&   1.3927\\
   2 &   3 & 1.0400& 1.0400  &  3 &   5 & 0.6746&   0.7136\\
   \hline
%    &&&\\ \hline
%    &&&\\
\end{tabular} 
\caption{Comparison of the theoretical and observed SSP coefficients that preserve the nonlinear stability properties in Example 1.}
\label{LinearDT}
\end{table} 

We notice that controlling the maximal rise of the total variation compared to the initial condition  \eqref{eqn:tv-vs-inital} requires a 
smaller allowable time-step, so we use this condition as our criterion for maximal allowable time-step.
A comparison of the  predicted  (i.e. theoretical) values of the SSP coefficient and the observed value 
for the Taylor series method, the two stage methods of order $p=2,3,4$, and the three-stage fourth and fifth order methods
are shown in Table \ref{LinearDT} 
We note that for the Taylor series method, the two-stage second order method, the two stage third order method, and
three stage fourth order method, the observed SSP coefficient matches exactly the theoretical value. On the other hand,
the two-stage fourth order and the three-stage fifth order, both of which have the smallest SSP coefficients (both in theory 
and practice), have a larger observed SSP coefficient than predicted. For the two-stage fourth order case this is expected, as 
we noted in Section 2.1 that the TVD time-step for this particular case is (as we observe here)  $(\sqrt{3}-1)$, larger than the 
more general SSP timestep $\sspcoef = 0.6788$.
%Interestingly, the two-stage fourth order method has an observed
%SSP coefficient of approximately $\sqrt{3}-1$, which is the SSP coefficient one would get from the first stage of that method, which is a Taylor series
%method with $\dt/2$. This indicates that the bound in this case results from the first step.

\subsubsection{Example 2: MSMD methods with weighted essentially non-oscillatory (WENO) methods}
The major use of MSMD time-stepping would be in conjunction with high order methods for problems with shocks.
In this section we consider two scalar problems: the linear advection equation
\begin{eqnarray} \label{linadv}
U_t + U_x =0
\end{eqnarray}
and the nonlinear Burgers' equation
\begin{eqnarray} \label{burgers}
U_t + \left( \frac{1}{2} U^2 \right)_x =0
\end{eqnarray}
on $x \in (-1,1)$.
In both cases we use the step function  initial conditions \eqref{eqn:sq_wave_ic},
and periodic boundaries. We use $N=201$ points in the domain, so that $\Delta x =\frac{1}{100}$.

We follow our previous work \cite{sealMSMD2014} with a minor modification for the spatial discretization.
The spatial discretization is performed as follows: at each iteration we take the known value $u^n$ and compute the flux
$f(u^n) = u^n$ in the linear case and $f(u^n) = \frac{1}{2} \left(u^n \right)^2$ for Burgers' equation. Now to compute the 
spatial derivative $f(u^n)_x$ we use the WENO method \cite{jiang1996}. In our test cases, we can avoid flux splitting, as $f'(u)$
is strictly non-negative (below, we refer to the WENO method on a  flux with $f'(u) \geq 0$ as WENO$^+$ and to to the corresponding method
on a  flux with $f'(u) \leq 0$ as WENO$^-$). 

\begin{figure}[t!] \hspace{0.25in} 
%\includegraphics[width=0.45\textwidth]{Figures/WENOADVNewTVD_new.eps}
%\includegraphics[width=0.45\textwidth]{Figures/WENOBurgersNewTVD_new.eps} \\
%\hspace*{0.3in} \includegraphics[width=0.45\textwidth]{Figures/WENOADVSSPvNonSSPNewTVD_new.eps}
%\includegraphics[width=0.45\textwidth]{Figures/WENOBurgersSSPvNonSSPNewTVD_new.eps}
%
\includegraphics[width=0.45\textwidth]{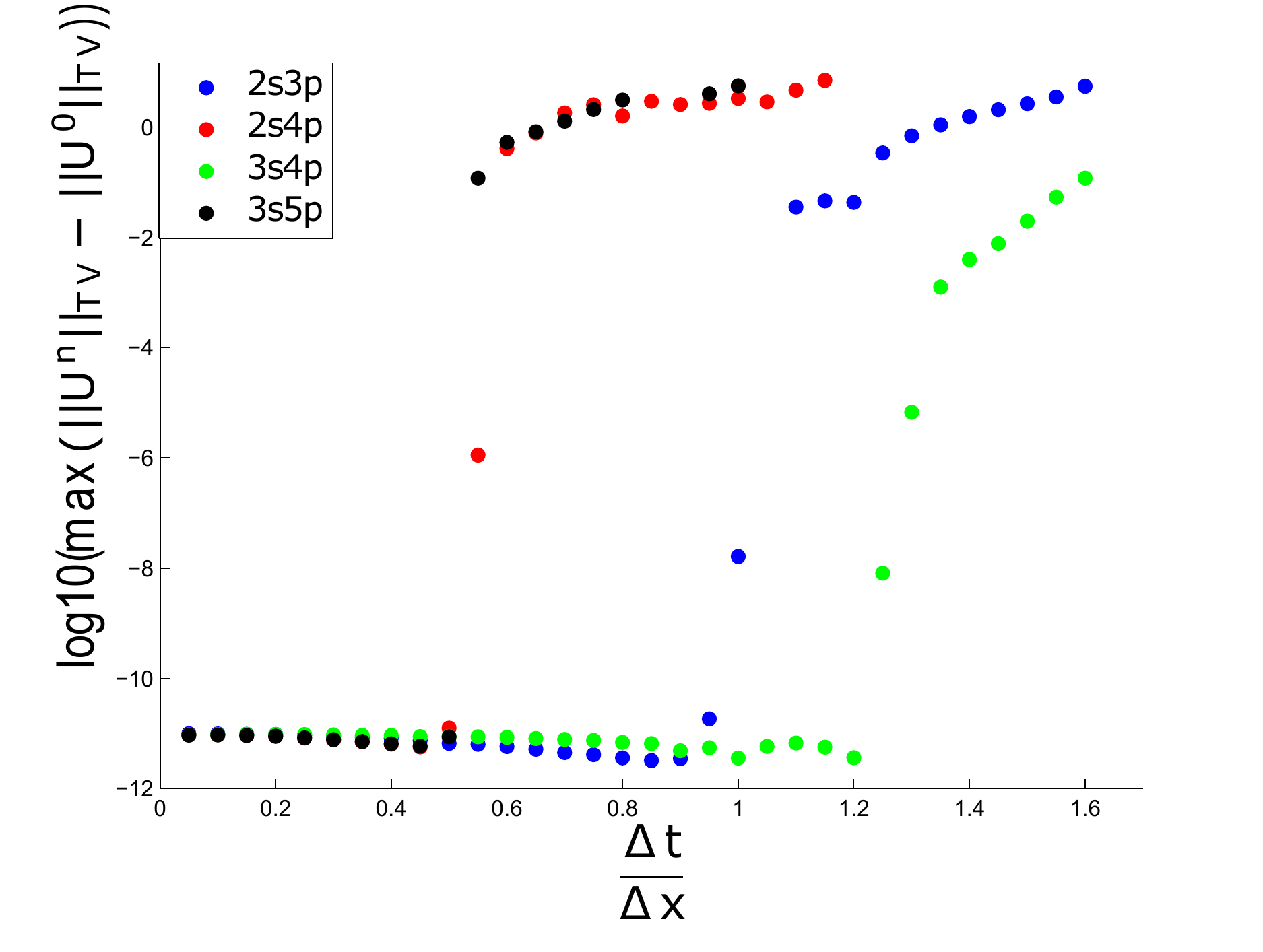}
\includegraphics[width=0.45\textwidth]{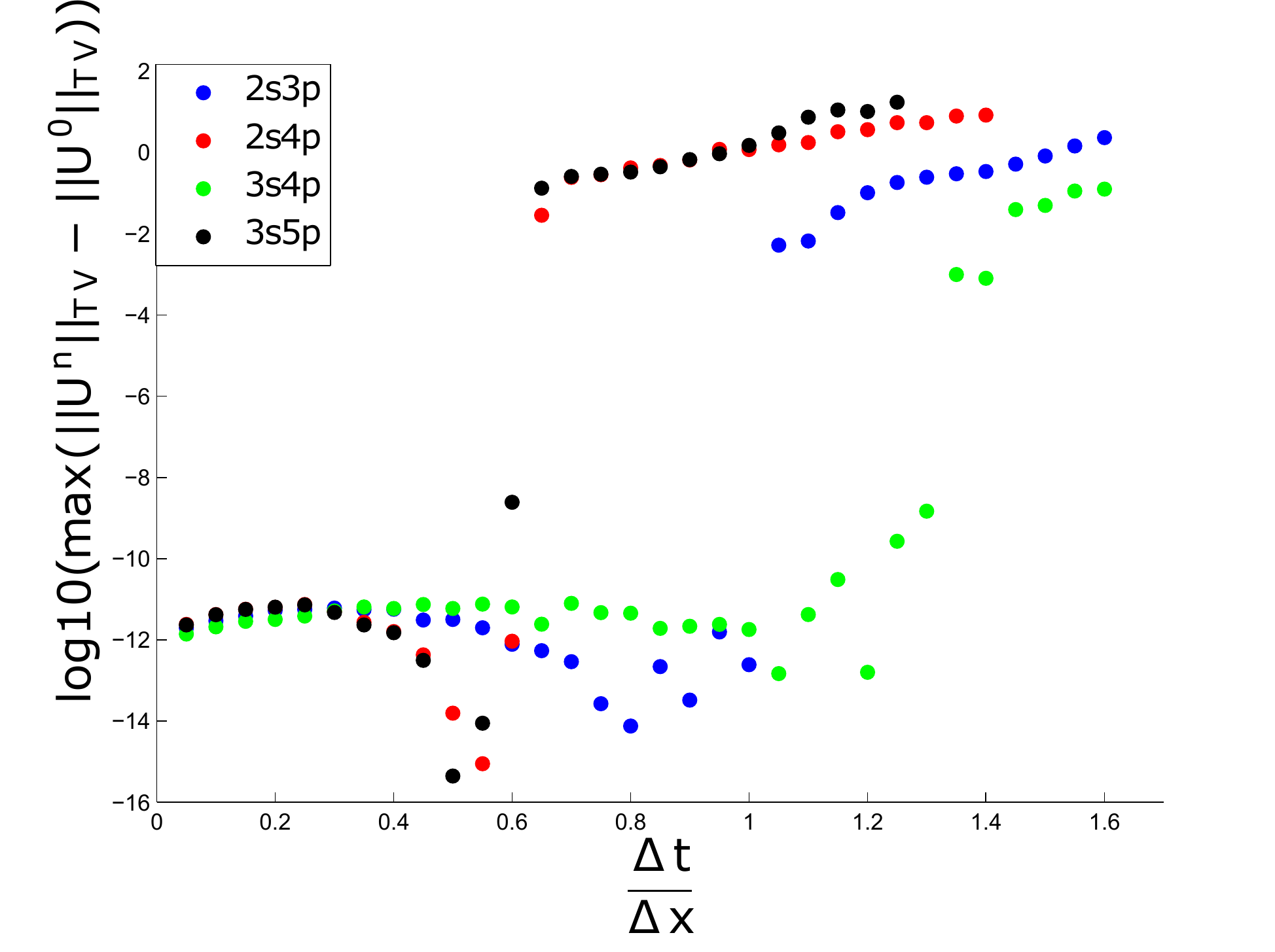} \\
\hspace*{0.3in} \includegraphics[width=0.45\textwidth]{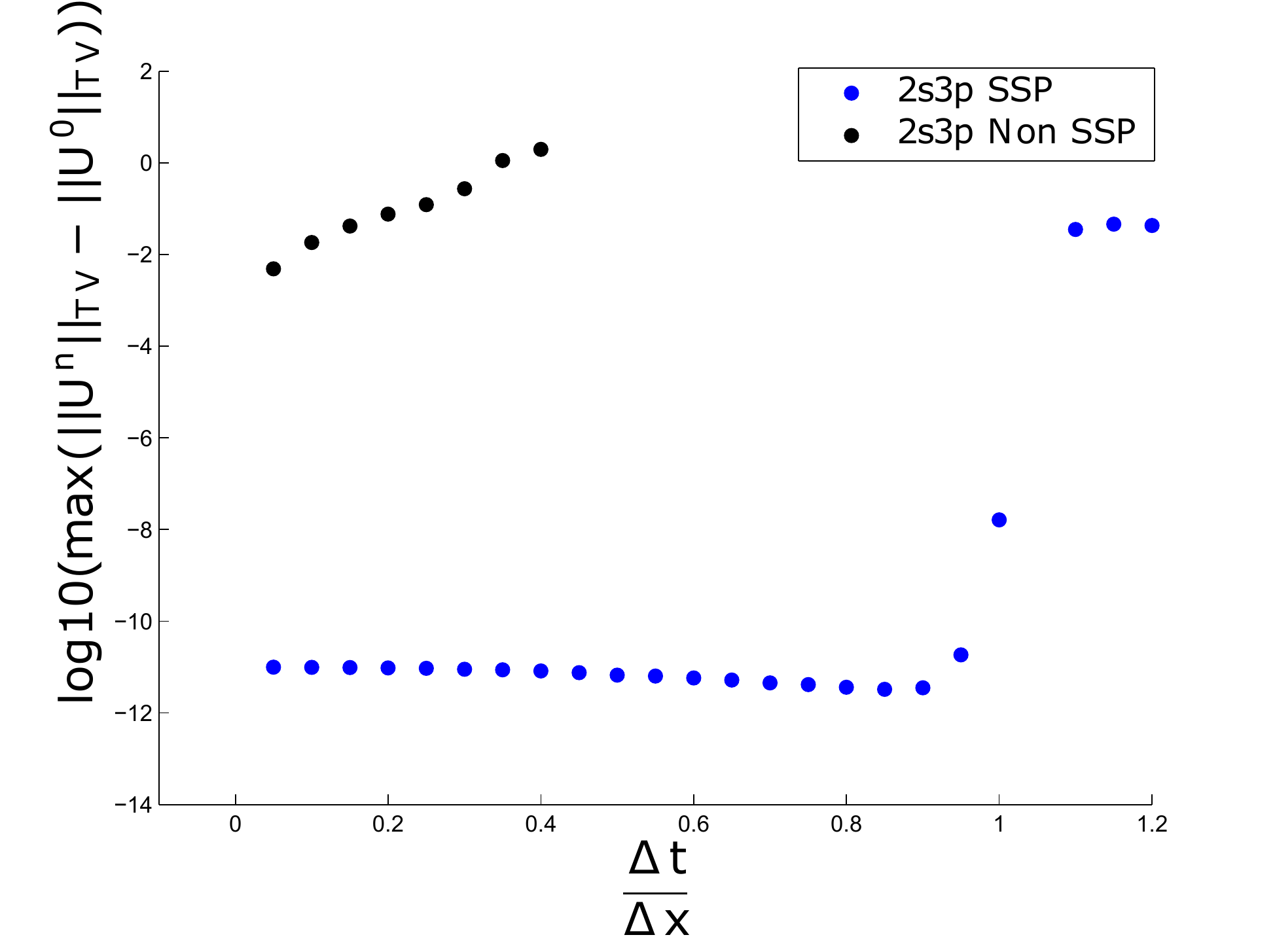}
\includegraphics[width=0.45\textwidth]{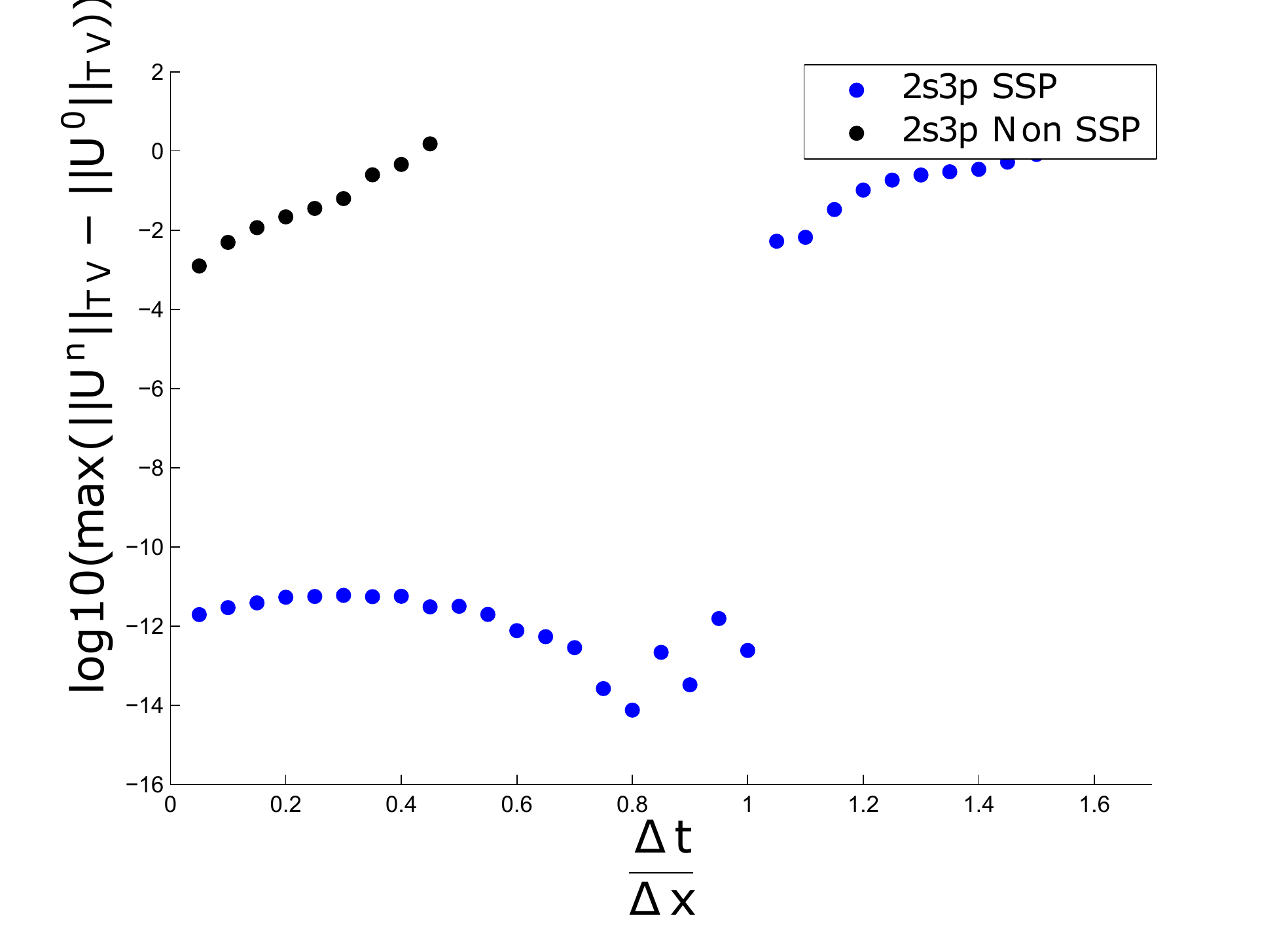}
\caption{Comparison of the rise in total variation as a function of the CFL number for Example 2. Linear advection on left and 
Burgers' equation on right. The top graphs compare the performance of different SSP methods, while the bottom
graphs compare the  two-stage third order SSP and non-SSP methods.
 }
 \label{WENOTest}
\end{figure}

Now we have the approximation to $U_t$ at time $t^n$, and wish to compute the approximation to $U_{tt}$. 
In previous work we defined the higher order derivative using central differences, but we have  found that 
additional limiting, in the form of the WENO$^-$ differentiation operator, is needed to achieve a pseudo-TVD like property.
For the linear flux, this is very straightforward as $U_{tt} =  U_{xx}$. To compute this, we take $u_x$ as computed before, 
and differentiate it using the WENO$^-$ method. Now we can compute the building block method.

For Burgers' equation, we have $U_{tt} = - \left( U U_t \right)_x$.  We take the approximation to $U_t$ that we 
obtained above using $WENO^+$, we multiply it by $u^n$ and differentiate in space using $WENO^-$. The choice of $WENO^{+}$ 
followed by $WENO^{-}$ is made by analogy to the first order finite difference  for the linear advection case, 
where we use a differentiation operator $D^{+}$  followed by the downwind differentiation operator $D^{-}$ to produce a centered difference
for the second derivative. The second derivative condition \eqref{vanishing} was satisfied by this approach.
Now we compute the building block
method with the approximations to $U_t$ and $U_{tt}$. In pseudocode, the building block calculation takes the form:
\begin{eqnarray*}
f(u^n)&=& \frac{1}{2}(u^n)^2;     \; \; \; \;  u^n_t =WENO^+(f(u^n));   \\
f'(u^n)&=&u^n ;    \; \; \; \; f(u^n)_t =f'(u^n)  u^n_t            \; \; \; \;
u^n_{tt} = WENO^-(f(u^n)_t) \\
u^{n+1} &=& u^n + \alpha \dt u^n_t +  \beta \dt^2 u^n_{tt} .
\end{eqnarray*}
We use the two stage   third order SSP method \eqref{2s3p}, and the non-SSP method \eqref{bad2s3p}
the two stage fourth order method \eqref{2s4p} and the three stage fourth order method in Appendix \ref{Appendix3s4pMethods}, 
the fifth order method \eqref{3s5pShuOsher}.  In these simulations, we use $\dt = \lambda \dx$ where $0.05 \leq \lambda \leq 1.6$, 
 and step up to $T_{final} =1.0$. At each time-step we compute  \eqref{eqn:tv-vs-inital},
 the maximal rise in total variation compared to the total variation of the initial solution. 
In Figure \ref{WENOTest} we  observe similar behavior to those of the linear advection with first order time-stepping, and once
again see that the SSP method is needed to preserve the nonlinear stability of WENO as well.

\subsection{Convergence studies} \label{convergence} 
As a final test case, we investigate the accuracy of the proposed schemes in conjunction with various high order spatial discretization operators.
 We perform several tests that demonstrate
that these methods converge with the correct order for linear and nonlinear problems.
In the first study (Example 3), we refine the grid only in time, and show that {\em if the spatial discretization is
sufficiently accurate} the  multi-derivative methods exhibit the design-order of convergence. 
We also compare the performance of the third order multi-derivative method to the three stage third order
explicit SSP Runge--Kutta method (SSPRK3,3) \cite{shu1988b} and show that the convergence properties are the same,
indicating that the additional error in approximating $F_t$ does not affect the accuracy of the method.
In the second study (Example 4) we co-refine the spatial and temporal grid by setting $\Delta t = \lambda \Delta x$
for a fixed $\lambda$, and shrink $\Delta x$.  We observed that since the order of the spatial method
is higher than the order of the temporal discretization, the time-stepping method
achieves  the design-order of accuracy both for linear and nonlinear problems.

\noindent{\bf Example 3a: temporal grid refinement with pseudospectral approximation of the spatial derivative.} 
We begin with a linear advection problem $U_t  + U_x=0$
with periodic boundary conditions and 
 initial conditions $u_0(x) = 0.5 + 0.5 \sin(x)$ on the spatial domain $x \in [0,2 \pi]$.
We discretize the spatial grid with $N=41$ equidistant points and use the Fourier pseudospectral 
differentiation matrix $\mD$ \cite{HGG2007} to compute $F \approx - U_x \approx  - \mD u$. We use a Lax-Wendroff approach to 
approximate $F_t \approx U_{xx} \approx \mD^2 u$.  In this case, the solution is a sine wave, so that the 
pseudospectral method is exact. For this reason, the spatial discretization of $F$ is exact and contributes no errors, and the
second derivative $F_t$ is also exact $F_t= u_{tt} = - \mD u_t = - \mD F  = \mD^2 u = \dot{F}$. 
We use a range of time steps, $\Delta t = \lambda \Delta x$
where we pick $\lambda =0.8, 0.7, 0.6, 0.5, 0.4, 0.3, 0.2, 0.1, \mbox{and} \;  0.05$ to compute the solution to final time $T_f = 2.0$.

In Table \ref{fig:conv_spectral} we list the errors for the SSPRK3,3,  the two derivative two stage third order method in Section 3.2
with $K=\frac{1}{\sqrt{2} }$ (listed as 2s3p in the table), the unique the two derivative two stage fourth order method (2s4p),
and the the two derivative three stage fifth order method \eqref{3s5p} (3s5p). 
We observe that the design-order of each method is verified. It is interesting to note that the SSPRK3,3 method has 
larger errors than the multiderivative method 2s3p, demonstrating that the additional computation of $\dot{F}$ does improve 
the quality of the solution.

\begin{table}
\begin{center} {\small
\begin{tabular}{|r||c|c||c|c||c|c||c|c|}
\hline
 & \multicolumn{2}{c||}{ \bf{SSPRK 3,3} } &  \multicolumn{2}{c||}{ \bf{2s3p} }  & \multicolumn{2}{c||}{ \bf{2s4p} } 
 & \multicolumn{2}{c|}{ \bf{3s5p} }  \\ \hline
\bf{$\lambda$} & \bf{error} & \bf{Order}  & \bf{error} & \bf{Order} & \bf{error} & \bf{Order} & \bf{error} & \bf{Order} \\ \hline
$0.8$  & $7.99 \times 10^{-5}$   &    ---  
           & $1.86 \times 10^{-5}$   &    ---   &  
           $1.96 \times 10^{-6}$ &    ---      & 
           $6.47  \times 10^{-8}$ &    ---        \\
$0.7$  & $5.24 \times 10^{-5}$   & $3.16$
	  & $1.21 \times 10^{-5}$  & $3.17$ 
	  &$1.12 \times 10^{-6}$ & $4.17$ & 
                $3.24  \times 10^{-8}$ & $5.17$  \\
$0.6$  &  $3.27 \times 10^{-5}$   & $3.04$ &
		$7.61 \times 10^{-6}$  & $3.05$ & 
		$6.02 \times 10^{-7}$ & $4.04$ & 
                $1.49  \times 10^{-8}$ & $5.04$  \\
$0.5$  & $1.93 \times 10^{-5}$   & $2.87$ 
	  & $4.50 \times 10^{-6}$  & $2.88$ 
	  & $2.97 \times 10^{-7}$ & $3.87$  
           & $6.12  \times 10^{-9}$ & $4.87$ \\
$0.4$  & $9.70 \times 10^{-6}$   & $3.10$
	  & $2.25 \times 10^{-6}$  & $3.10$ 
	  & $1.18 \times 10^{-7}$ & $4.10$ & 
                $1.96  \times 10^{-9}$ & $5.09$  \\
$0.3$  & $4.09 \times 10^{-6}$   & $3.00$
	   & $9.50 \times 10^{-7}$  & $3.00$ 
	   &  $3.76 \times 10^{-8}$ & $3.99$ &
                 $4.66  \times 10^{-10}$ & $5.00$  \\                 
$0.2$  & $1.21 \times 10^{-6}$   & $2.99$ 
	  & $2.81 \times 10^{-7}$  & $3.00$ 
	  & $7.43 \times 10^{-9}$  & $4.01$ &
                 $6.13  \times 10^{-11}$ & $5.00$ \\
$0.1$  & $1.50 \times 10^{-7}$   & $3.01$
	  & $3.49 \times 10^{-8}$  & $3.01$ 
	  &  $4.61 \times 10^{-10}$ & $4.00$&
                 $1.90  \times 10^{-12}$ & $5.01$  \\
$0.05$ & $1.88 \times 10^{-8}$   &$2.99$ 
	   & $4.36 \times 10^{-9}$ & $3.00$ 
	   & $2.88 \times 10^{-11}$ & $3.97$ &
                  $5.97  \times 10^{-14}$ & $4.99$ \\ \hline
\end{tabular}}
\end{center}
\caption{Convergence study for Example 3a, the linear advection problem with pseudospectral differentiation of the spatial
derivatives. Here we use $N=41$ equidistant points between $(0, 2 \pi)$, and $\Delta t = \lambda \Delta x$. The solution is 
evolved forward to time $T_f=2.0$ using the explicit SSP Runge--Kutta method (SSPRK3,3),
the two-stage third order two derivative method (2s3p), the two-stage fourth order two derivative method (2s4p),
and three-stage fifth order two derivative method (3s5p).}
\label{fig:conv_spectral}
\end{table}

\noindent{\bf Example 3b: temporal grid refinement with WENO approximations of the spatial derivative.} 
Using the same problem as above we discretize the spatial grid with $N=101$ equidistant points and use 
the ninth order weighted essentially non-oscillatory method (WENO9) \cite{Balsara_Shu_2000:WENO9} 
to differentiate the spatial derivatives. 
It is interesting to note that although the PDE we solve is linear, the use of the 
nonlinear method WENO9 results in a non-linear ODE.
To evolve this ODE in time we use a range of time steps defined by  $\Delta t = \lambda \Delta x$ for $\lambda = 0.9, 0.8, 0.7, 0.6, 0.5, 0.4$
to compute the solution to final time $T_f = 2.0$
In Table \ref{fig:conv_weno9} we list the errors for the SSPRK3,3,  the two derivative two stage third order method in Section 3.2
with $K=\frac{1}{\sqrt{2} }$ (listed as 2s3p in the table), the unique the two derivative two stage fourth order method (2s4p),
and the the two derivative three stage fifth order method \eqref{3s5p} (3s5p). 
We observe that the design-order of each method is verified and that once again the SSPRK3,3 method has 
larger errors than the multiderivative method 2s3p. These results indicate that the approximation of the second derivative term
$F_t$ via a Lax-Wendroff procedure and discretization in space does not affect the observed order of the time-stepping method,
as long as the spatial errors do not dominate. 

To see what happens if the spatial discretization errors dominate over the time errors, we compare two different WENO spatial discretizations:
the fifth order  method WENO5 and the ninth order method WENO9. 
Here, we use  $N=301$ points in space, and we choose $\Delta t = \lambda \Delta x$ for
$\lambda = 0.8, 0.6, 0.4, 0.2, 0.1, 0.05$. We evolve the solution in time to $T_f =2.0$ using the 
 2s3p and SSPRK3,3 methods. The log-log graph of the  errors vs. $\Delta t$ is given in Figure \ref{fig:conv_weno5_weno9a}
We observe that the errors from both time discretizations with the WENO5 spatial discretization (dotted lines) 
have the correct orders when $\Delta t$ is 
larger and the time errors dominate, but as $\Delta t$ gets smaller the spatial errors dominate convergence is lost.
On the other hand, for the range of $\Delta t$ studied, the time-errors dominate over the spatial errors when using the
highly accurate WENO9 (solid line) and so convergence is not lost.  We note that to see this behavior, the spatial
approximation must be sufficiently accurate. In this case this is attained by using $301$ points in space
and a high order spatial discretization.
We also studied this problem with a co-refinement of the spatial and temporal grids, where we use 
$\Delta t = 0.8 \Delta x$ for $ \Delta x = \frac{2 \pi}{N-1}$ and $N=41, 81, 161, 321$.
Figure \ref{fig:conv_weno5_weno9b} shows that while for larger $\Delta t$ the errors for WENO5 are larger than for WENO9,
and this is more pronounced for the multi-derivative methods than for the explicit Runge--Kutta,
once the grid is sufficiently refined the temporal error dominates and we see the third order convergence in time.

It is interesting to note that in all these studies  the explicit SSP Runge--Kutta
method SSPRK3,3 and the two-derivative method 2s3p behave similarly, 
indicating that the  approximation of the
second derivative does not play a role in the loss of accuracy. 

% here we used $\epsilon = 10^{-33}$ for WENO5 and $\epsilon = 10^{-10}$ for WENO9
% especially given that the
%definition for the second derivative of the solution $\dot{F}$  is not an exact
%time-derivative of the $F$ in the method-of-lines formulation of the PDE. 

\begin{figure}[t]
    \centering
    \begin{minipage}{.475\textwidth}
        \centering
        \includegraphics[width=.9\textwidth]{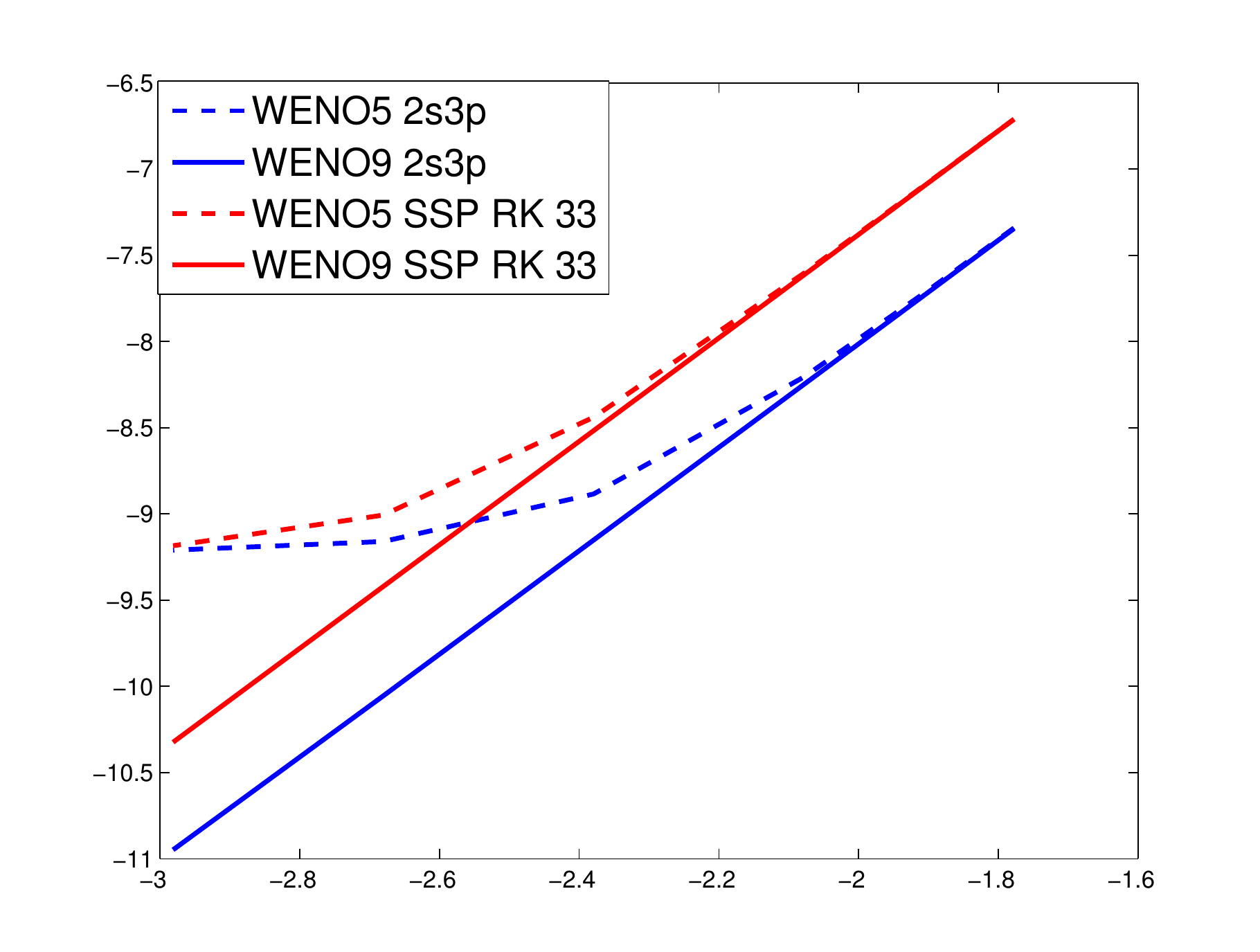}
       \caption{\small WENO5 vs WENO9 with grid refinement in time but not space Example 3b. On the x-axis are  
       $log_{10}(\Delta t)$ and on the y-axis are $ log_{10}(error)$. 
       The errors from both time discretizations with the WENO5 spatial discretization (dotted lines) 
have the correct orders when $\Delta t$ is 
larger and the time errors dominate, but as $\Delta t$ gets smaller the spatial errors dominate convergence is lost.
       }
               \label{fig:conv_weno5_weno9a}
                \end{minipage}% 
    \hspace{0.2in}
    \begin{minipage}{0.475\textwidth}
        \centering
        \includegraphics[width=0.9\linewidth]{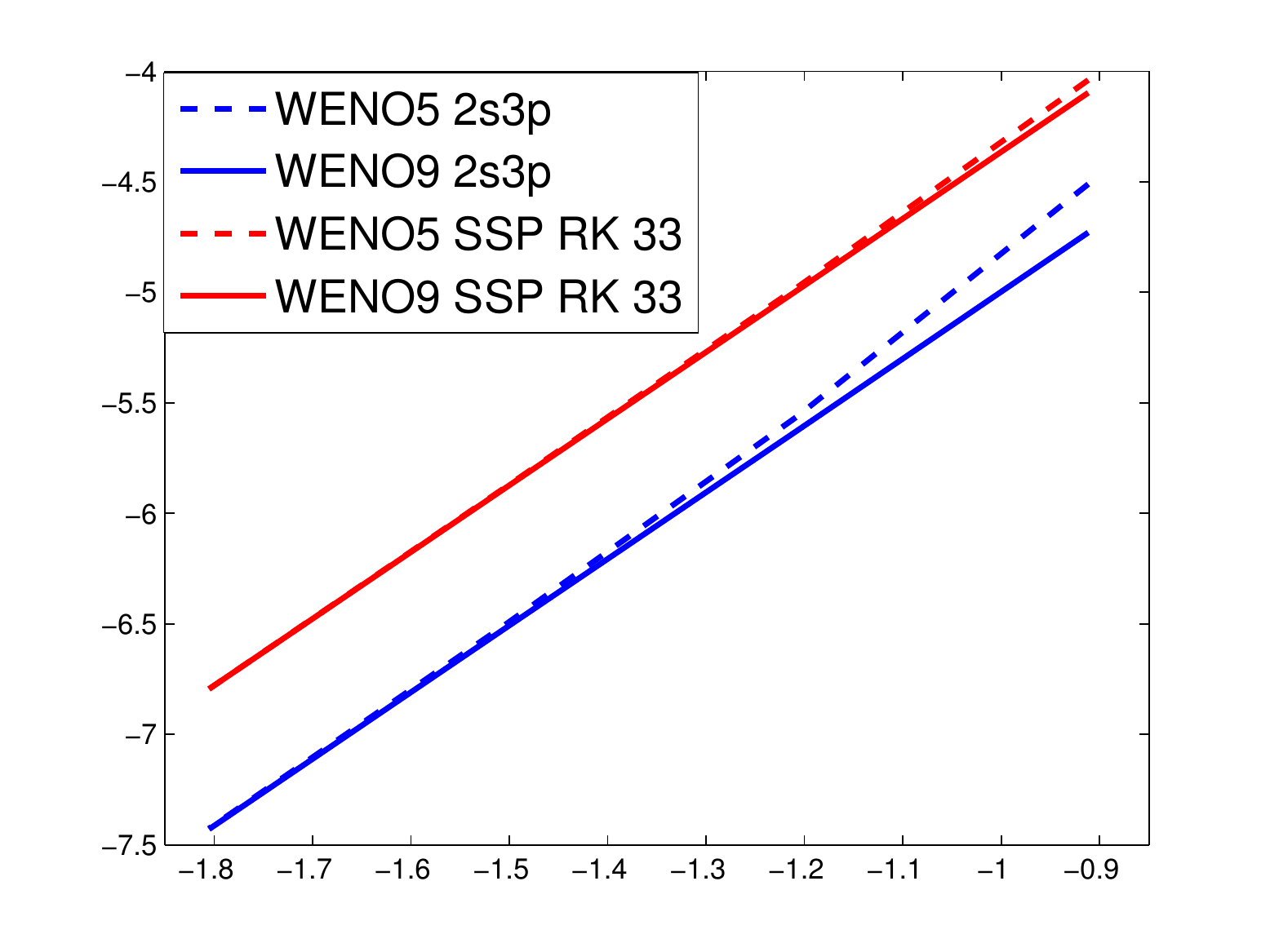}
        \caption{\small Example 3b: WENO5 vs WENO9 with refinement in both space and time, $\Delta t = 0.8 \Delta x$ . On the x-axis are  
       $log_{10}(\Delta t)$ and on the y-axis are $ log_{10}(error)$.  Here we see that 
       for larger $\Delta t$ the errors for WENO5 are larger than for WENO9 (especially for the multiderivative method)
       but once the grid is sufficiently refined
       the temporal order dominates and third order convergence in time is observed.}
               \label{fig:conv_weno5_weno9b}
                \end{minipage}% 
\end{figure}

\begin{table}
\begin{center} {\small
\begin{tabular}{|r||c|c||c|c||c|c||c|c|}
\hline
 & \multicolumn{2}{c||}{ \bf{SSPRK 3,3} } &  \multicolumn{2}{c||}{ \bf{2s3p} }  & \multicolumn{2}{c||}{ \bf{2s4p} } 
 & \multicolumn{2}{c|}{ \bf{3s5p} }  \\ \hline
\bf{$\lambda$} & \bf{error} & \bf{Order}  & \bf{error} & \bf{Order} & \bf{error} & \bf{Order} & \bf{error} & \bf{Order} \\ \hline
$0.9$    &$ 7.37 \times 10^{-6} $ &    ---   &  
           $1.71  \times 10^{-6}$  &    ---   & 
         $ 8.25 \times 10^{-8}$  &    ---   &  
         $1.24 \times 10^{-9}$ &    ---      \\
$0.8$& $5.24 \times 10^{-6}$ &$2.89$&
  	$1.22 \times 10^{-6}$ &$2.89$&
	$5.21 \times 10^{-8}$ &$3.89$&
	$6.99 \times 10^{-10}$ &$4.89$\\
$0.7$ & $3.44 \times 10^{-6}$ &$3.14$ &
 	$7.99 \times 10^{-7}$ &$3.14$&
	$3.00 \times 10^{-8}$ &$4.14$&
	$3.52 \times 10^{-10}$ &$5.14$ \\
$0.6$& $2.18 \times 10^{-6}$ &$2.96$&
	$5.08 \times 10^{-7}$  &$2.96$&
	$1.63 \times 10^{-8}$  &$3.96$&
	$1.64 \times 10^{-10}$ &$4.96$\\
$0.5$& $1.27 \times 10^{-6}$ &$2.98$&
	$2.94 \times 10^{-7}$ &$2.98$&
	$7.89 \times 10^{-9}$ &$3.98$&
	$6.61 \times 10^{-11}$&$4.97$\\
$0.4$& $6.47 \times 10^{-7}$ &$3.01$&
	$1.50 \times 10^{-7}$ &$3.01$&
	$3.22 \times 10^{-9}$ &$4.01$&
	$2.18 \times 10^{-11}$&$4.97$\\ \hline
\end{tabular}}
\end{center}
\caption{Convergence study for Example 3b, the linear advection problem with WENO9 differentiation of the spatial
derivatives. Here we use $N=101$ equidistant points between $(0, 2 \pi)$, and $\Delta t = \lambda \Delta x$.}
\label{fig:conv_weno9}
\end{table}

%-------------------------------------------------------
\noindent{\bf Example 4: co-refinement of the spatial and temporal grids on linear advection with WENO7}

\noindent{\bf Example 4a:} In this example, we compare the errors and order of convergence of the multiderivative methods on 
a linear and nonlinear problem. For the linear problem, we use the linear advection problem 
\[ U_t + U_x = 0 \; \; \; \; \mbox{on $ x \in [-1,1]$} \]
with the initial condition
\begin{equation}
\label{eqn:smooth_advection_ic}
    u_0(x) = 0.5 + 0.5 \sin( \pi x ), \quad x \in [-1,1],
\end{equation}
We compute both $F$ and $\dot{F}$ using a seventh order WENO (WENO7)
\cite{Balsara_Shu_2000:WENO9}
spatial derivative. We set   $\dt = 0.8 \Delta x$ and evolve the solution to 
final time of $T_{final} = 2.0$, at which point the exact solution
is identical to the initial state. For time-stepping, we use the same three multiderivative schemes
2s3p, 2s4p, and 3s5p and the explicit SSP Runge--Kutta method SSPRK3,3. In Table  \ref{fig:advection-convergence},
where we compare the errors and orders of these three methods.
These numerical experiments show that once the mesh is sufficiently refined, the design-order of accuracy is reached.
The results verify that the proposed schemes are genuinely high-order accurate
despite the fact that the second derivative is not an exact second derivative
of the method of lines formulation.

\begin{table}
\begin{center}
{\small
\begin{tabular}{|r||c|c||c|c||c|c||c|c|} \hline
 & \multicolumn{2}{c||}{ \bf{SSPRK 3,3} } &  \multicolumn{2}{c||}{ \bf{2s3p} }  & \multicolumn{2}{c||}{ \bf{2s4p} } 
 & \multicolumn{2}{c|}{ \bf{3s5p} }  \\ \hline
\bf{$N$} & \bf{error} & \bf{order}  & \bf{error} & \bf{order} & \bf{error} & \bf{order} & \bf{error} & \bf{order} \\ \hline
%$  10$ & $5.06\times 10^{-03}$ & --- & $1.38\times 10^{-03}$ & --- & $5.59\times 10^{-04}$ & --- & $6.54\times 10^{-04}$ & ---\\
%\hline
%$  20$ & $1.39\times 10^{-03}$ & $1.86$ & $2.81\times 10^{-04}$ & $2.29$ & $5.86\times 10^{-05}$ & $3.25$ & $3.66\times 10^{-05}$ & $4.16$\\
%\hline
$  41$ & $3.00\times 10^{-04}$ & ---  & $5.94\times 10^{-05}$ &  ---  & $7.54\times 10^{-06}$ & $---$ & $2.59\times 10^{-06}$ & ---\\
$  81$ & $3.75\times 10^{-05}$ & $3.00$ & $7.39\times 10^{-06}$ & $3.01$ & $4.71\times 10^{-07}$ & $4.00$ & $8.03\times 10^{-08}$ & $5.01$\\
$ 161$ & $4.69\times 10^{-06}$ & $3.00$ & $9.23\times 10^{-07}$ & $3.00$ & $2.94\times 10^{-08}$ & $4.00$ & $2.51\times 10^{-09}$ & $5.00$\\
$ 321$ & $5.86\times 10^{-07}$ & $3.00$ & $1.15\times 10^{-07}$ & $3.00$ & $1.84\times 10^{-09}$ & $4.00$ & $7.82\times 10^{-11}$ & $5.00$\\
$ 641$ & $7.32\times 10^{-08}$ & $3.00$ & $1.44\times 10^{-08}$ & $3.00$ & $1.15\times 10^{-10}$ & $4.00$ & $2.45\times 10^{-12}$ & $5.00$\\
$1281$ & $9.15\times 10^{-09}$ & $3.00$ & $1.80\times 10^{-09}$ & $3.00$ & $7.19\times 10^{-12}$ & $4.00$ & $7.75\times 10^{-14}$ & $4.98$\\
\hline
\end{tabular}}
\end{center}
\caption{Convergence study for Example 4a: linear advection with WENO7 spatial differentiation and
refinement of both spatial and temporal grids. Here we use $N$ points in space and 
$\Delta t = 0.8 \Delta x$. The four methods used to evolve the solution to time $T_f=2.0$ are
the explicit SSP Runge--Kutta method SSPRK3,3 and the three multiderivative methods
2s3p, 2s4p, and 3s5p.
Despite the fact that the second derivative $\dot{F}$
is not the exact second derivative $F_t$, we still observe
high-order accuracy for all methods once the grids are sufficiently refined.
\label{fig:advection-convergence}}
\end{table}

\begin{table}
\begin{center}
{\small
\begin{tabular}{|r||c|c||c|c||c|c||c|c|} \hline
 & \multicolumn{2}{c||}{ \bf{SSPRK 3,3} } &  \multicolumn{2}{c||}{ \bf{2s3p} }  & \multicolumn{2}{c||}{ \bf{2s4p} } 
 & \multicolumn{2}{c|}{ \bf{3s5p} }  \\ \hline
\bf{$N$} & \bf{error} & \bf{order}  & \bf{error} & \bf{order} & \bf{error} & \bf{order} & \bf{error} & \bf{order} \\ \hline
$ 161$ & $3.09\times 10^{-04}$ & --- & $1.91\times 10^{-04}$ & --- & $1.58\times 10^{-04}$ & --- & $1.67\times 10^{-04}$ & --- \\
$ 321$ & $5.11\times 10^{-05}$ & $2.60$ & $2.00\times 10^{-05}$ & $3.25$ & $1.33\times 10^{-05}$ & $3.57$ & $1.76\times 10^{-05}$ & $3.25$\\
$ 641$ & $6.96\times 10^{-06}$ & $2.88$ & $1.70\times 10^{-06}$ & $3.56$ & $7.12\times 10^{-07}$ & $4.22$ & $9.34\times 10^{-07}$ & $4.23$\\
$1281$ & $8.80\times 10^{-07}$ & $2.98$ & $1.81\times 10^{-07}$ & $3.23$ & $3.38\times 10^{-08}$ & $4.39$ & $2.73\times 10^{-08}$ & $5.09$\\
$2561$ & $1.10\times 10^{-07}$ & $3.00$ & $2.22\times 10^{-08}$ & $3.03$ & $1.99\times 10^{-09}$ & $4.09$ & $7.49\times 10^{-10}$ & $5.19$\\
$5121$ & $1.38\times 10^{-08}$ & $3.00$ & $2.76\times 10^{-09}$ & $3.01$ & $1.24\times 10^{-10}$ & $4.01$ & $2.21\times 10^{-11}$ & $5.08$\\
$10241$ & $1.72\times 10^{-09}$ & $3.00$ & $3.44\times 10^{-10}$ & $3.00$ & $7.75\times 10^{-12}$ & $4.00$ & $7.04\times 10^{-13}$ & $4.97$\\
%$20480$ & $2.15\times 10^{-10}$ & $3.00$ & $4.30\times 10^{-11}$ & $3.00$ & $4.84\times 10^{-13}$ & $4.00$ & $2.37\times 10^{-14}$ & $4.89$\\
\hline
\end{tabular}}
\end{center}
\caption{
Convergence study for Example 4b: Burgers' equation with WENO7 spatial differentiation and
refinement of both spatial and temporal grids. Here we use $N$ points in space and 
$\Delta t = 0.8 \Delta x$. The four methods used to evolve the solution to time $T_f=2.0$ are
the explicit SSP Runge--Kutta method SSPRK3,3 and the three multiderivative methods
2s3p, 2s4p, and 3s5p. A more refined grid is needed in this example compared to the 
linear example, but for a sufficiently refined grid 
we still observe high-order accuracy for all methods.
\label{fig:burger-convergence}}
\end{table}

\noindent{\bf Example 4b:} Next, we present results for the more difficult non-linear Burgers equation, with initial conditions prescribed by
\begin{equation}
\label{eqn:smooth_burgers_ic}
    U_0(x) = 1.0 + 0.2 \sin( \pi x ), \quad x \in [-1,1],
\end{equation}
and periodic boundary conditions.  
Once again, $F$ and $\dot{F}$ are computed via the WENO7 spatial discretization.
We use a constant time-step $\dt = 0.8 \frac{ \Delta x }{ \max_i | u_0(x_i) | }$ and 
run this problem with a final time of $T_{final} = 1.4$, before a shock forms.
Since the solution at this  point the solution  remains smooth, we can 
use the method of characteristics  to compute the  the exact solution by
\begin{align} \label{eqn:burger-implicit}
    U(t,x) = U_0\left( x - t \cdot U_0( \xi ) \right), \quad \xi = x - t \cdot U_0(\xi).
\end{align}
We solve for the implicit variable $\xi$ using Netwon iteration with a
tolerance of $10^{-14}$.
The errors and order are  presented in Table
\ref{fig:burger-convergence}.  We see that it takes an even smaller  mesh size than the linear problem 
before the errors reach the asymptotic regime,
but that once this happens we achieve the expected order.
Again, these numerical experiments further
validate the fact that if the spatial error is not allowed to dominate, the high-order design-accuracy of the time discretization
attained despite the fact that we do not directly differentiate the method of
lines formulation to define the second derivative.

\section{Conclusions}
With the increasing popularity of multi-stage multiderivative methods for use as time-stepping methods for 
hyperbolic problems, the question of their strong stability properties needs to be addressed. In this work
we presented an SSP formulation for multistage two-derivative methods. We assumed that, in addition to the 
forward Euler condition, the spatial discretization of interest satisfies a second derivative condition of the 
form \eqref{vanishing}. With these  assumptions in mind, we formulated an optimization problem which enabled us
to find optimal explicit SSP multistage two-derivative methods of up to order five, thus breaking the SSP
order barrier for explicit SSP Runge--Kutta methods. Numerical test cases verify the convergence of these methods at the 
design-order, show that sharpness of the 
SSP condition in many cases, and demonstrate the need for SSP time-stepping methods in simulations
where the spatial discretization is specially designed to satisfy certain nonlinear stability properties.
Future work will involve building SSP multiderivative methods while assuming  different base conditions (as in Remark 1)
and with higher derivatives. Additional work will involve developing new spatial discretizations suited for use with SSP multiderivative
time stepping methods. These methods will  be based on WENO or discontinuous Galerkin methods and will 
satisfy pseudo-TVD and similar properties for systems of equations.

{\bf Acknowledgements} This work was supported by: AFOSR grants  FA9550-12-1-0224, FA9550-12-1-0343, 
FA9550-12-1-0455, FA9550-15-1-0282, and FA9550-15-1-0235; NSF grant DMS-1418804;  
New Mexico Consortium grant  NMC0155-01; and  NASA  grant NMX15AP39G.

\appendix
\section{Coefficients of three stage fourth order methods}
\label{Appendix3s4pMethods}

\begin{enumerate}
\item For $K=\frac{1}{2}$ we obtain an SSP coefficient $\sspcoef=r =1.1464$. The Butcher array coefficients are given by
\[ A = \left[ \begin{array}{lll}
   0 & 0 &0\\ 0.436148675945340     & 0 &   0 \\  0.546571371212865 &  0.156647174804152  &  0   \end {array} \right], 
     \; \; \; \;   b = \left[ \begin{array}{l}
 0.528992280543542 \\   0.105732787708912 \\  0.365274931747546  \end {array} \right] \]

\[ \hat{A} = \left[ \begin{array}{lll}
   0 & 0 &  0 \\ 0.095112833764436 &   0 &  0 \\ 0.071032477596813   & 0.107904226252921  &  0  \end {array} \right] , 
       \; \; \; \; 
              \hat{b} = \left[ \begin{array}{lll}
 0.074866026156687  \\ 0.073410341982927  \\ 0.048740310097159  \end {array} \right]. \]
 
 The Shu-Osher arrays are $R \ve = (1,0,0,0)^T$,
\[ P = \left[ \begin{array}{l l l l}
  0&0&0&0  \\    0.5000& 0& 0&0 \\  0.253176729307242  & 0.179580018745470  & 0& 0\\ 
  0.181986129712082 &  0.000000001889650 &  0.418750476492704 &    
    0
\\ \end {array} \right] \]

\[  Q = \left[ \begin{array}{l l l l}
  0&0&0&0  \\    0.5& 0& 0&0 \\   0 &0.567243251947287  &0  &0\\ 0.140002406985210  & 0.003037359947341&   0.256223624973014 &0\ \end {array} \right] \]
 
\item For $K=\sqrt{\frac{1}{2}}$ we obtain an SSP coefficient $\sspcoef = r= 1.3927$
Butcher formulation
  \[ A = \left[ \begin{array}{lll}
   0 & 0 &0\\ 0.443752012194422     & 0 &   0 \\0.543193299768317 &  0.149202742858795 &  0   \end {array} \right],
     \; \; \; \;   
     b = \left[ \begin{array}{l}
 0.515040964378407 \\  0.178821699719783 \\  0.306137335901811  \end {array} \right] \]
     
 \[ \hat{A} = \left[ \begin{array}{lll}
   0 & 0 &0\\ 0.098457924163299 &   0 &  0 \\ 0.062758211639901   &0.110738910914425  &  0  \end {array} \right],
     \; \; \; \;   
 \hat{b} = \left[ \begin{array}{l}
 0.072864982225864  \\ 0.073840478463180 \\  0.061973770357455
   \end {array} \right].\]

 The Shu-Osher arrays are $R \ve = (1,0,0,0)^T$,
    \[
  P = \left[ \begin{array}{l l l l}
  0&0&0&0  \\    0.618033988749895& 0& 0&0 \\   0.362588515112176 &  0.207801573327953     & 0& 0\\  0.144580879241747&   0.110491604448675 &  0.426371652664792 & 0\\ \end {array} \right] \]
\[
  Q = \left[ \begin{array}{l l l l}
  0&0&0&0  \\    0.381966011250105& 0& 0&0 \\   0 & 0.429609911559871  &0  &0\\ 0.078129569197367   &0&  0.240426294447419 &0\ \end {array} \right] .\]

\item For $K=1$ we obtain an SSP coefficient $\sspcoef=r =1.6185$. The Butcher array coefficients are given by
  \[ A = \left[ \begin{array}{lll}
   0 & 0 &0  \\  0.452297224196082     & 0 & 0  \\ 0.528050722182308& 0.159236998008155  &  0     \end {array} \right],
     \; \; \; \;   
     b = \left[ \begin{array}{l}
  0.502519798444212\\   0.210741084344740 \\  0.286739117211047  \end {array} \right] \]
     
 \[ \hat{A} = \left[ \begin{array}{lll}
   0 & 0 &0  \\
    0.102286389507741 &   0 &  0  \\  
   0.055482128781494 & 0.108677624192402 &  0  \end {array} \right],
    \; \; \; \;   
\hat{b} = \left[ \begin{array}{lll}
 0.071256397204544 \\  0.069475972085130 \\   0.066877749079721  \end {array} \right]. \]

 The Shu-Osher arrays are $R \ve = (1,0,0,0)^T$,
    
\[ P = \left[ \begin{array}{llll}
  0&0&0&0  \\      0.732050807568877 & 0& 0&0 \\  0.457580533944888 &  0.257727809835459     & 0& 0\\  0.137886171629970  & 0.176326540063367 &  0.464092174540814 &  0\\ \end {array} \right], \]
  \[
  Q = \left[ \begin{array}{l l l l}
  0&0&0&0  \\     0.267949192431123 & 0& 0&0 \\   0 &0.284691656219654 &0  &0\\ 0.046502314960818   &0&   0.175192798805030 &0\ \end {array} \right] . \]

\end{enumerate}

%%%

\section{Two stage third order method}
This code gives the SSP coefficient and the Butcher and Shu Osher arrays for the 
     optimal explicit SSP two stage third order method given the value $K$.

\begin{verbatim}
clear all
k=sqrt(0.5); % Choose K
tab=[];
% Set up the polynomial for Q(3,1) to solve for SSP coefficient r
AA=sqrt(k^2+2) - k;
p0=2*k*(AA-2*k) + 4*k^3*AA;
p1=-p0;
p2=(1-p0)/(2*k^2);
p3= -(p0/(2*k) + k)/(6*k^3);
CC=[p3,p2,p1,p0]; % polynomial coefficients
RC=roots(CC);
r=RC(find(abs(imag(RC))<10^-15)); % SSP coefficient is the only real root.
%--------------------------------------------------------------------------
% Once we have the k and r we want we define the method following (21)
a= (k*sqrt(k^2+2)-k^2)/r;
b2 = ((k^2*(1-1/r)) + r*(.5-1/(6*a)))/(k^2+.5*r*a);
b1=1-b2;
ahat=.5*a^2;
bhat1=.5*(1-b2*a)-1/(6*a);
bhat2=1/(6*a)-.5*b2*a;
% The Butcher arrays are given by
A=zeros(2,2); Ahat=A;
A(2,1)=a;
b=[b1,b2];
Ahat(2,1)=ahat;
bhat=[bhat1,bhat2];
S=[0 0 0 ; a 0 0; b1 b2 0];
Shat=[0 0 0 ; ahat 0 0 ; bhat1 bhat2 0];
% The Shu-Osher matrices are given by
I=eye(3);e=ones(3,1);
Ri=I+r*S+(r^2/k^2)*Shat;
v=Ri\e;
P = r*(Ri\S);
Q= r^2/k^2*(Ri\Shat);
violation=min(min([v, S, Shat, P, Q])) % use this to check that all these are positive
tab=[tab;[k,r,violation]]; % build the table of values 
     \end{verbatim}

     \section{Three stage fifth order method}
     This code gives the SSP coefficient and the Butcher and Shu Osher arrays for the 
     optimal explicit SSP three stage fifth order method given the value $K$.
   \begin{verbatim}  
clear all
format long
syms r
tab=[];
k=sqrt(0.5) %The second derivative condition coefficient K
% Find the SSP coefficient C given K
a21= 240*k^6*(1 -r - r^2/(2*k^2) + r^3/(6*k^2) + r^4/(24*k^4) - r^5/(120*k^4))/r^6;
Q31=10*r^2*a21^4 - 100*k^2*a21^3 - 10*r^2*a21^3 + 130*k^2*a21^2  + 3*r^2*a21^2 - 50*k^2*a21 +6*k^2;
RC=vpasolve(simplify(r^22*Q31)==0);
rr=RC(find(abs(imag(RC))<10^-15));
C= max(rr) %The SSP coefficient
% -------------------------------------------------------------------------
% The Butcher array coefficients given K and C
a21= 240*k^6*(1 -C - C^2/(2*k^2) + C^3/(6*k^2) + C^4/(24*k^4) - C^5/(120*k^4))/C^6;
ah32= ( (3/5 -a21)^2/(a21*(1-2*a21)^3) - (3/5 -a21)/(1-2*a21)^2 )/10;
ah31= ( (3/5 -a21)^2/(1-2*a21)^2)/2  -ah32;
a31= (3/5 -a21)/(1-2*a21);
bh2=(2*a31-1)/(12*a21*(a31-a21));
bh3=(1-2*a21)/(12*a31*(a31-a21));
bh1=1/2-bh2-bh3;
ah21=(1/24 - bh3*(ah31+ah32) )/bh2;
% Build the Butcher matrices
a= C*a21+ah21*C^2/k^2;
b= C*a31+ah31*C^2/k^2;
c= ah32*C^2/k^2;
d= C+ bh1*C^2/k^2;
e= bh2*C^2/k^2;
f = bh3*C^2/k^2;
Ri=([1 0 0 0; a 1 0 0 ; b c 1 0; d e f 1]);
S=[0 0 0 0; a21 0 0 0; a31 0 0 0 ;1 0 0 0];
Shat=[0 0 0 0; ah21 0 0 0; ah31 ah32 0 0 ;bh1 bh2 bh3 0];
% The Shu Osher matrices given C
eone=ones(4,1)
v=Ri\eone;
P = C*(Ri\S);
Q= C^2/k^2*(Ri\Shat);
% Double check that there are no violations of the SSP conditions:
violation=min(min([v, S, Shat, P, Q])) % use this to check that all these are positive
tab=[tab;[k,a21,C,violation]]; % build the table of values 
          \end{verbatim}
     \bibliography{LNL}
\end{document}